\definecolor{backcol1}{rgb}{0.95,1,1}%
\definecolor{backcol2}{rgb}{1,0.95,1}%
\definecolor{backcol3}{rgb}{1,1,0.95}%
\definecolor{backcol4}{rgb}{1,0.95,0.95}%
\definecolor{backcol5}{rgb}{0.95,0.95,1}%
\definecolor{backcol6}{rgb}{0.95,1,0.95}%
\definecolor{backcol7}{rgb}{0.95,0.9,0.75}%
\definecolor{hellgelb}{rgb}{1,1,0.85}%
\definecolor{colKeys}{rgb}{0,0,1}%
\definecolor{colIdentifier}{rgb}{0,0,0}%
\definecolor{colComments}{rgb}{1,0,0}%
\definecolor{colString}{rgb}{0,0.5,0}%
\definecolor{red}       {rgb}{0.0,0.0,1.0}        
\definecolor{magenta}   {rgb}{0.0,0.0,1.0}        
\definecolor{cyan}      {rgb}{0.0,0.0,1.0}        
\definecolor{green}     {rgb}{0.0,0.4,0.3}        
\title{Active Set Algorithm for Large-Scale Continuous Knapsack Problems
with Application to Topology Optimization Problems}
\author{R. Tavakoli \thanks{Department of Material Science and Engineering, Sharif University of
Technology, Tehran, Iran, P.O. Box 11365-9466, email:
\href{mailto:tav@mehr.sharif.edu}{tav@mehr.sharif.edu}, %
\href{mailto:rohtav@gmail.com}{rohtav@gmail.com} %
 (early version of this paper is available on optimization-online server).%
 } }
\begin{document}

\maketitle

\begin{center}
 {{\tiny 11 October 2009}}\vspace*{5mm}%
\end{center}


\maketitle


\begin{abstract}%
The structure of many real-world optimization problems includes
minimization of a nonlinear (or quadratic) functional subject to
bound and singly linear constraints (in the form of either equality
or bilateral inequality) which are commonly called as continuous
knapsack problems. Since there are efficient methods to solve
large-scale bound constrained nonlinear programs, it is desirable to
adapt these methods to solve knapsack problems, while preserving
their efficiency and convergence theories. The goal of this paper is
to introduce a general framework to extend a box-constrained
optimization solver to solve knapsack problems. This framework
includes two main ingredients which are O(n) methods; in terms of
the computational cost and required memory; for the projection onto
the knapsack constrains and the null-space manipulation of the
related linear constraint. The main focus of this work is on the
extension of Hager-Zhang active set algorithm (SIAM J. Optim. 2006,
pp. 526--557). The main reasons for this choice was its promising
efficiency in practice as well as its excellent convergence theories
(e.g., superlinear local convergence rate without strict
complementarity assumption). Moreover, this method does not use any
explicit form of second order information and/or solution of linear
systems in the course of optimization which makes it an ideal choice
for large-scale problems. Moreover, application of Birgin and
Mart{\'\i}nez active set algorithm (Comp. Opt. Appl. 2002, pp.
101--125) for knapsack problems is also briefly commented. The
feasibility of the presented algorithm is supported by numerical
results for topology optimization problems.

\end{abstract}%


\begin{keywords}gradient projection, linearly constrained optimization, null space
method, singly linear bound constrained optimization, superlinear
convergence, support vector machines, topology optimization.
\end{keywords}

\begin{AMS}90C06, 90C26, 65Y20. \end{AMS}



\section{Introduction}
\label{sec:int}

The goal of this paper is to develop efficient methods to solve
large-scale linearly constrained optimization problems with the
following structure
\begin{equation}%
\label{eq:P}
    \min f(\mathbf{x}) \quad \texttt{s.t.} \quad {\mathbf{x}} \in \mathcal{D}
\end{equation}
the feasible set $\mathcal{D}$ is equal to either of
$\mathcal{D}_{\mathcal{E}}$ or $\mathcal{D}_{\mathcal{I}}$ which are
defined as follows
\begin{align}%
\label{eq:deq}
   \mathcal{D}_{\mathcal{E}} \stackrel{def}{=}& \ \{ \mathbf{x} \in  \mathcal{B}: \quad
  \mathbf{a}^T \mathbf{x} = b \ \}
  \\ %
\label{eq:dieq}
   \mathcal{D}_{\mathcal{I}}  \stackrel{def}{=}& \ \{ \mathbf{x} \in  \mathcal{B}:  \quad
  b_l \leqslant \mathbf{a}^T \mathbf{x} \leqslant b_u \}
  \\%
\label{eq:box}%
   \mathcal{B} \stackrel{def}{=}& \ \{ \mathbf{x} \in  \mathbb{R}^n: \quad
  \mathbf{l} \leqslant \mathbf{x} \leqslant \mathbf{u}\}
\end{align}
where $f$ is a real-valued continuously differentiable function
defined on $\mathcal{D}$, $\mathbf{a}, \mathbf{l}, \mathbf{u} \in
\mathbb{R}^n$, $b, b_l, b_u \in \mathbb{R}$ and $\mathbf{l}
\leqslant \mathbf{u}$. In the related literature, set $\mathcal{D}$
is usually called as the continuous knapsack constraints. Although
problem \eqref{eq:P} can be studied under context of general
linearly constrained optimization problems, due to the importance
and wide range of applications, there are a lot of works
specifically conducted to solve problems with \eqref{eq:P}
sprecture, e.g. see:
\cite{more1990solution,melman2000efficient,dai2006new,dahiya2007convex,lucidi2007convergent,lin2009decomposition,tseng2008coordinate,gonzalez2009aas}.
Optimization problems like \eqref{eq:P} occur frequently in the
filed of operational researchs like optimal resource allocation and
marketing, refer to
\cite{bretthauer2002nonlinear,patriksson2008survey} as some topical
reviews.

In addition to the mentioned applications, finite-dimensional
counterpart of some constrained infinite-dimensional optimization or
control problems leads to a problem with structure like
\eqref{eq:P}. Topology optimization problems with resource
constraint can be accounted in this context. Note that, in this case
we have also a PDE as an equality constraints. However it is easy to
remove these constraints by performing the optimization on the
reduced space spanned by the state PDE.

Although in the past two decades, several methods with good
convergence theories have been introduced to solve linearly
constrained optimization problems, developing an efficient method to
solve large scale problems (in terms of computational cost and
memory usage) is still remained as an open problem. An O(n) growth
of the memory usage and computational cost per iteration as well as
the global convergence and quadratic or superlinear local
convergence seems to be desirable properties of an efficient method.
To realize this goal it is required to avoid solutions of linear
systems of equations per iteration or at least avoiding the exact
solution of such systems. This goal is almost realized in the case
of bound-constrained optimization problems and currently lot of
methods are available to solve large scale box-constrained
optimizations. In this context we can recall the active set Newton
algorithm of \cite{facchinei2002truncated}, affine-scaling
interior-point Newton methods of
\cite{heinkenschloss1999superlinear} and new active set algorithm of
\cite{hager2006new}. One of the most important properties of these
methods is relaxing the strict complementarity condition to the
strong second order sufficient optimality condition to prove the
local superlinear rate of convergence.

It seems that, a key to develop an efficient method to solve large
scale knapsack constrained optimization problems is to adapt large
scale box-constrained solvers for such structures; while preserving
their efficiency and convergence theories. There were a few works in
which this clue was taken into account. In \cite{dai2006new}, by
introducing an efficient method to project a trial step on to the
knapsack constraints, the projected Barzilai-Borwein method
\cite{dai2005projected} was extended to solve knapsack constrained
optimization problems. Modifying the search direction to keep
iterations interior with respect to the related linear constraint,
\cite{gonzalez2009aas} extended the affine-scaling interior-point
CBB method \cite{hager2009affine} to solve knapsack constrained
optimization problems. However, the mentioned methods possess a
linear local rate of local convergence at the best conditions.

Roughly speaking, the goal of this paper is to introduce a general
framework to extend almost every bound-constrained optimization
method to solve knapsack constrained problems without destroying its
efficiency and convergence theories. For this purpose we shall
specifically concentrate on new active set algorithm by
\cite{hager2006new}. The main reasons for this choice are its
excellent convergence and efficiency in contrast to alternative
methods. Moreover, we briefly discuses the extension of the
Bigin-Marinez active-set algorithm (GENCAN solver)
\cite{birgin2002large} to solve knapsack problems. This algorithm is
very similar to that of Hager and Zhang. It is worth mentioning that
extension of this algorithm to solve linearly constrained nonlinear
programs is already performed in \cite{andretta2010partial}, which
is called as GENLIN solver. However, it is not a computationally
feasible algorithm for large scale problems. We are hopeful that,
following the presented procedure, it be possible to extend other
alternative methods for such problems without important technical
difficulties.

The efficient and stable implementation of the projection onto the
knapsack constrains and null-space treatment of the linear
constraint are main ingredients of the presented framework for
mentioned extension in this study. Our method to project a point
onto the space of knapsack constrains is partly identical to that of
\cite{dai2006new,kiwiel2008breakpoint}. But, the technical details
are different and some new results are also included. According to
our knowledge, an efficient and stable implementation of the
null-space method for knapsack constrains is introduced in this
study for the first time.

{\bf Notations. } For any scalar $v \in \mathbb{R}$, $v^+ = \max\{v,
0\}$ and $v^- = \min\{v, 0\}$. The median operator acting on triple
$\{u, v, w\}$ is denoted by $\mathtt{mid}(u, v, w)$, i.e.,
$\mathtt{mid}(u, v, w) = \max\{u, \min\{v, w\} \}$. For an arbitrary
vector $\mathbf{v} \in \mathbb{R}^n$ and bound vectors $\mathbf{l},
\mathbf{u} \in \mathbb{R}^n$ $(\mathbf{l} \leqslant \mathbf{u})$,
the operator $\mathtt{mid}(\mathbf{l}, \mathbf{v}, \mathbf{u})$
results a vector $\mathbf{w} \in \mathbb{R}^n$ such that $w_i =
\mathtt{mid}(l_i, v_i, u_i), \ i=1, \ldots, n$. The gradient and
Hessian of the objective function $f(\mathbf{x})$ with respect to
$\mathbf{x}$ are denoted by $\nabla_\mathbf{x} f(\mathbf{x})$ and
$\nabla^2_\mathbf{x} f(\mathbf{x})$ respectively. For any
$\mathbf{x} \in \mathcal{B}$ the active and inactive index-set (with
respect to bound constraints) are denoted by
$\mathcal{A}(\mathbf{x})$ and $\mathcal{I}(\mathbf{x})$
respectively, i.e.%
\[%
\mathcal{A}(\mathbf{x}) = \{i \in [0,n] : \ x_i = l_i \ \mathtt{or}
\ x_i = u_i\}
\]%
\[
\mathcal{I}(\mathbf{x}) = \{i \in [0,n] : \ l_i < x_i < u_i\}\
 \qquad
\]%
The operator $\mathtt{dim}\ (\mathcal{A}(\mathbf{x}))$ returns the
dimension of the active set with respect to bound constraints at
$\mathbf{x}$. Assuming $\mathtt{dim}\ (\mathcal{A}(\mathbf{x}))=t$,
the operator $\mathtt{shrink(\mathbf{x})}$ gives the row (column)
vector $\mathbf{x} \in \mathbb{R}^n$ as the input argument and
returns the reduced row (column) vector $\mathbf{v} \in
\mathbb{R}^{n-t}$ which includes only entries of $\mathbf{x}$
corresponding to the inactive indices. Similarly, the operator
$\mathtt{expand(\mathbf{v})}$ gives the row (column) vector
$\mathbf{v} \in \mathbb{R}^{n-t}$ as the input argument and returns
the expanded row (column) vector $\mathbf{x} \in \mathbb{R}^n$ by
adding the active indices to $\mathbf{v}$ accordingly. The subscript
$k$ ($\square_k$) is often used to denote the quantity of interest
at the $k$-th iteration. When there is no confusion, we may use
$\mathbf{v}_k$ to denote $\mathbf{v}(\mathbf{x}_k)$.


\section{Projection onto the knapsack constraints}
\label{sec:projection}

As it was mentioned in section \ref{sec:int}, the projection of a
trial point onto the feasible set $(\mathcal{D})$ is one of the main
ingredient of the presented method in this study. This issue is
studied in this section. Consider the trial point $\mathbf{y} \in
\mathbb{R}^n$, the projection of $\mathbf{y}$ onto the feasible set;
which is denoted by $\mathbf{z} \in \mathbb{R}^n$ here; is equal to
solution of the following constrained
least square problem%
\begin{equation}%
\label{eq:proj}%
    \mathbf{z} := \mathcal{P}_\mathcal{D}(\mathbf{y}) =
                  \arg \min_{\mathbf{x} \in \mathcal{D}} \
                  \frac 12 {\| \mathbf{x} - \mathbf{y} \|}_2^2%
\end{equation}
$\mathcal{P}_\mathcal{D}(\mathbf{y})$ in \eqref{eq:proj} denotes the
projection operator which projects a trial point $\mathbf{y}$ onto
the feasible set $\mathcal{D}$ with respect to the euclidean norm.
In the first part of this section we shall consider projection onto
 $\mathcal{D}_\mathcal{E}$. Later we extend our method to solve
 problem of projection onto  $\mathcal{D}_\mathcal{I}$. Solution of \eqref{eq:proj}
 with $\mathcal{D}=\mathcal{D}_\mathcal{E}$ is equivalent to solving
 the following quadratic programming (QP) problem%
\begin{equation}%
\label{eq:proj_e}%
    \min \quad \frac 12\ \mathbf{x}^T \mathbf{I} \ \mathbf{x} -
    \mathbf{y}^T \mathbf{x} %
    \quad \mathtt{s.t.:} \quad \mathbf{a}^T \mathbf{x} = b,%
    \quad \mathbf{l} \leqslant \mathbf{x} \leqslant \mathbf{u},%
\end{equation}
where $\mathbf{I} \in \mathbb{R}^{n\times n}$ denotes the identity
matrix. In general solution of a QP problem is expensive, however,
exploiting specific structure of \eqref{eq:proj_e} results an
efficient computational method which is discussed here.

\begin{proposition}%
\label{propos:fs}%
The feasible set $\mathcal{D}_\mathcal{E}$ is non-empty if the
following conditions hold%
\[%
\sum_{i=1}^{n} ( u_i a_i^- + l_i a_i^+)%
\leqslant b \leqslant %
\sum_{i=1}^{n} ( u_i a_i^+ + l_i a_i^- )%
\]%
\end{proposition}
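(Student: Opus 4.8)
The plan is to show that under the stated inequalities, there exists $\mathbf{x} \in \mathcal{B}$ with $\mathbf{a}^T\mathbf{x} = b$, by exhibiting the extreme values of the linear functional $\mathbf{x} \mapsto \mathbf{a}^T\mathbf{x}$ over the box $\mathcal{B}$ and invoking a continuity/connectedness argument. First I would observe that $\mathcal{B}$ is a nonempty compact convex set (a box), and that $g(\mathbf{x}) := \mathbf{a}^T\mathbf{x}$ is continuous on $\mathcal{B}$; hence its image $g(\mathcal{B})$ is a closed bounded interval $[m, M]$. The feasible set $\mathcal{D}_\mathcal{E}$ is nonempty precisely when $b \in [m, M]$, so it suffices to identify $m$ and $M$ and check that the displayed bounds coincide with them.

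The key step is the coordinatewise computation of $m = \min_{\mathbf{x}\in\mathcal{B}} \mathbf{a}^T\mathbf{x}$ and $M = \max_{\mathbf{x}\in\mathcal{B}} \mathbf{a}^T\mathbf{x}$. Since $\mathbf{a}^T\mathbf{x} = \sum_{i=1}^n a_i x_i$ separates over the coordinates and each $x_i$ ranges independently over $[l_i, u_i]$, we have $M = \sum_{i=1}^n \max\{a_i l_i, a_i u_i\}$ and $m = \sum_{i=1}^n \min\{a_i l_i, a_i u_i\}$. For the maximum: if $a_i \geqslant 0$ the term is maximized at $x_i = u_i$, contributing $a_i u_i = a_i^+ u_i$ (and $a_i^- u_i = 0$); if $a_i < 0$ it is maximized at $x_i = l_i$, contributing $a_i l_i = a_i^- l_i$ (and $a_i^+ l_i = 0$). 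In either case the contribution equals $a_i^+ u_i + a_i^- l_i$, so $M = \sum_{i=1}^n (u_i a_i^+ + l_i a_i^-)$. The symmetric argument for the minimum (swap the roles of $l_i$ and $u_i$) gives $m = \sum_{i=1}^n (u_i a_i^- + l_i a_i^+)$. Thus the displayed condition is exactly $m \leqslant b \leqslant M$.

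Finally, I would close the argument: given $b \in [m, M]$, by the intermediate value theorem applied to $g$ along the segment joining a minimizer $\mathbf{x}^m$ (with $g(\mathbf{x}^m)=m$) to a maximizer $\mathbf{x}^M$ (with $g(\mathbf{x}^M)=M$), both of which lie in the convex set $\mathcal{B}$, there is a point on that segment — still in $\mathcal{B}$ by convexity — at which $g = b$; this point lies in $\mathcal{D}_\mathcal{E}$. I do not expect any genuine obstacle here: the only mild care needed is the case-split on the sign of $a_i$ to verify the identity $\max\{a_i l_i, a_i u_i\} = u_i a_i^+ + l_i a_i^-$ (and to handle $a_i = 0$, where both expressions vanish, consistently with either convention for $v^+, v^-$), which is a routine check given the notation fixed in the introduction.
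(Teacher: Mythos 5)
Your proof is correct and follows the same route the paper has in mind: the paper simply declares the claim ``trivial considering the geometry of $\mathcal{D}_\mathcal{E}$'' (citing equation 2.6 of the Dai--Fletcher reference), and your argument --- identifying $\sum_i (u_i a_i^- + l_i a_i^+)$ and $\sum_i (u_i a_i^+ + l_i a_i^-)$ as the minimum and maximum of $\mathbf{a}^T\mathbf{x}$ over the box via the coordinatewise sign split, then invoking continuity and convexity to get the full interval --- is exactly the omitted detail. The case check on the sign of $a_i$ matches the paper's conventions $v^+=\max\{v,0\}$, $v^-=\min\{v,0\}$, so nothing further is needed.
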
%

\begin{proof}
The proof is trivial considering the geometry of
$\mathcal{D}_\mathcal{E}$ (also see equation 2.6 in
\cite{dai2006new}).
\end{proof}

\begin{theorem}%
\label{th:proj_e}%
Assume that the feasible domain $\mathcal{D}_\mathcal{E}$ is
non-empty. Then problem \eqref{eq:proj_e} has a unique solution
$\mathbf{x}^*$
which is computed by %
\begin{equation}%
\label{eq:solution_proj_e}%
\mathbf{x}^* = \mathtt{mid} (\mathbf{l}, \ \mathbf{y} - \lambda^*
\mathbf{a}, \ \mathbf{u})%
\end{equation}%
where $\lambda^* \in \mathbb{R}^n$ is the Lagrange multiplier
corresponding to linear equality constraint $\mathbf{a}^T \mathbf{x}
= b$ at the unique KKT point of
\eqref{eq:proj_e}, and  $\lambda^*$ is equal to the unique root of the following equation%
\[
h(\lambda) = b - \sum_{i=1}^n \big[\ a_i\ \mathtt{mid} (l_i, \ y_i -
\lambda a_i, \ u_i)\ \big].
\]
\end{theorem}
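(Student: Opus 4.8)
The statement bundles three assertions --- existence and uniqueness of the minimizer $\mathbf x^*$, the closed form \eqref{eq:solution_proj_e}, and the characterization of $\lambda^*$ as the root of $h$ --- and I would establish them in that order, drawing the first two from convex duality for \eqref{eq:proj_e} and the third from the continuity and monotonicity of $h$. For existence and uniqueness, note that the objective of \eqref{eq:proj_e} differs from $\frac12\|\mathbf x-\mathbf y\|_2^2$ by a constant, hence is continuous, strictly convex and coercive on $\mathbb R^n$, while $\mathcal D_\mathcal E$ is closed, convex, bounded (it is the box $\mathcal B$ cut by a hyperplane) and, by Proposition~\ref{propos:fs}, non-empty. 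Weierstrass' theorem then produces a minimizer and strict convexity makes it unique; this point is $\mathbf x^*=\mathcal P_{\mathcal D_\mathcal E}(\mathbf y)$.

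Next I would reduce the optimality system to the scalar equation. All constraints of \eqref{eq:proj_e} are affine, so the KKT conditions are necessary at $\mathbf x^*$ with no further constraint qualification, and, the problem being convex, they are also sufficient. Writing the multiplier $\lambda$ for $\mathbf a^T\mathbf x=b$ and $\mu_i,\nu_i\geqslant 0$ for $x_i\leqslant u_i$ and $l_i\leqslant x_i$, stationarity is $x_i-y_i+\lambda a_i+\mu_i-\nu_i=0$, which together with complementarity forces, by a three-way case split, $x_i=y_i-\lambda a_i$ when $y_i-\lambda a_i\in(l_i,u_i)$, $x_i=u_i$ when $y_i-\lambda a_i\geqslant u_i$, and $x_i=l_i$ when $y_i-\lambda a_i\leqslant l_i$; that is, $x_i=\mathtt{mid}(l_i,y_i-\lambda a_i,u_i)$. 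Conversely, for any $\lambda$ the point $\mathbf x(\lambda):=\mathtt{mid}(\mathbf l,\mathbf y-\lambda\mathbf a,\mathbf u)$ together with $\mu_i:=(y_i-\lambda a_i-u_i)^+$ and $\nu_i:=(l_i-y_i+\lambda a_i)^+$ satisfies stationarity, complementarity, the bound constraints and $\mu,\nu\geqslant 0$, so it is the minimizer exactly when it also meets $\mathbf a^T\mathbf x(\lambda)=b$, i.e.\ when $h(\lambda)=0$. This proves \eqref{eq:solution_proj_e} and identifies $\lambda^*$ as the equality-constraint multiplier.

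It remains to show $h$ has a root. The function $h$ is continuous and piecewise linear, and letting $\lambda\to-\infty$, resp.\ $\lambda\to+\infty$, in each term $a_i\,\mathtt{mid}(l_i,y_i-\lambda a_i,u_i)$ gives $h(-\infty)=b-\sum_i(a_i^+u_i+a_i^-l_i)\leqslant 0$ and $h(+\infty)=b-\sum_i(a_i^+l_i+a_i^-u_i)\geqslant 0$, the two inequalities being precisely the feasibility bounds of Proposition~\ref{propos:fs}; the intermediate value theorem then yields a root $\lambda^*$. Each summand is non-increasing in $\lambda$ --- its slope is $-a_i^2$ on the unclipped region and $0$ elsewhere --- so $h$ is non-decreasing, which yields uniqueness of the root.

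I expect the only genuinely delicate point to be the uniqueness of $\lambda^*$, because $h$ is merely weakly monotone rather than strictly so, so its zero set could in principle be a nondegenerate interval; the clean remedy is to construct $\mathbf x(\lambda^*)$ from \emph{any} root of $h$ and certify it as the projection directly --- either through the KKT verification above, or through the variational inequality $\langle\mathbf y-\mathbf x(\lambda^*),\mathbf x-\mathbf x(\lambda^*)\rangle\leqslant 0$ for every feasible $\mathbf x$, which follows by summing the one-dimensional projection inequalities $(y_i-\lambda^* a_i-x_i(\lambda^*))(t-x_i(\lambda^*))\leqslant 0$, $t\in[l_i,u_i]$, against $\mathbf a^T(\mathbf x-\mathbf x(\lambda^*))=0$; since $\mathbf x^*$ is unique this shows every root produces the same $\mathbf x^*$, so \eqref{eq:solution_proj_e} is unambiguous. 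The remaining steps are routine convex-analysis bookkeeping.
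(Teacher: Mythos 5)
Your proof is correct and follows essentially the same route as the paper: reduce \eqref{eq:proj_e} via the Lagrangian/KKT system to the componentwise formula $x_i(\lambda)=\mathtt{mid}(l_i,\,y_i-\lambda a_i,\,u_i)$ and then to the scalar equation $h(\lambda)=0$. The paper gets the $\mathtt{mid}$ formula by invoking the projected-gradient optimality condition $\mathcal{P}_{\mathcal{B}}(\mathbf{x}^*(\lambda)-\nabla_{\mathbf{x}}\mathcal{L})-\mathbf{x}^*(\lambda)=0$ for the box-constrained Lagrangian subproblem, whereas you write out the bound multipliers explicitly and do the three-way case split; these are equivalent, yours being the more self-contained version. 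Two of your refinements go beyond what the paper records and are worth keeping: you actually verify that a root of $h$ exists by computing $h(\pm\infty)$ and matching the signs to the feasibility bounds of Proposition~\ref{propos:fs}, and you correctly flag that $h$ is only weakly monotone, so its zero set can be a nondegenerate interval (e.g.\ when every unclipped component disappears over a range of $\lambda$, which happens precisely when the equality constraint's gradient is linearly dependent on the active bound gradients and the multiplier is genuinely non-unique). The paper simply asserts a ``unique root,'' which is not literally true in such degenerate cases; your observation that every root of $h$ produces the same point $\mathbf{x}(\lambda^*)$, certified either by the KKT system or by the variational inequality, is the right repair and makes \eqref{eq:solution_proj_e} well defined regardless. (Incidentally, your statement that $h$ is non-decreasing is the correct sign; the paper's surrounding text calls $h$ ``nonincreasing,'' which contradicts its own Proposition~\ref{propos:root_interval}.)
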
%

\begin{proof}%
The existence and uniqueness of solution and Lagrange multiplier is
 followed directly by the strict convexity of the problem and
non-emptiness assumption of the feasible domain. Now let us to
reformulate  \eqref{eq:proj_e} as a box-constrained optimization
problem by augmenting the linear equality constraint to the objective function%
\begin{equation}%
\label{eq:al_th_proj_e}%
\mathcal{L}[\textbf{x}; \lambda]= \frac 12\ \mathbf{x}^T \mathbf{I}
\ \mathbf{x} - \mathbf{y}^T \mathbf{x} + \lambda (\textbf{a}^T
\textbf{x} - b)%
\quad %
\mathtt{s.t.:} %
\quad \mathbf{l} \leqslant \mathbf{x} \leqslant \mathbf{u}%
\end{equation}%
where $\lambda \in \mathbb{R}$ is the lagrange multiplier
corresponding to the linear equality constraint. It is obvious that
the unique stationary point of Lagrangian $\mathcal{L}[\textbf{x};
\lambda]$ constrained by box $\mathcal{B}$ which is shown by
$(\textbf{x}^*, \lambda^*)$ is equal to the solution of
\eqref{eq:proj_e}.

For a fixed value of $\lambda$, the box constrained stationary point
of \eqref{eq:al_th_proj_e}, $\textbf{x}^*(\lambda)$, can be computed
by the projected gradient method (cf. \cite{lin1999newton}). Using
optimality conditions based on the projected gradient method,
$\textbf{x}^*(\lambda)$ is equal to the unique zero of projected
gradient (with respect to \textbf{x})
of Lagrangian $\mathcal{L}$, i.e.,%
\begin{equation}%
\label{eq:th_proj_e_x_lambda}%
    \mathcal{P}_{\mathcal{B}}
    \big(\textbf{x}^*(\lambda) -
    \nabla_{\textbf{x}}\mathcal{L}[\textbf{x}^*(\lambda);
    \lambda]\big) - \textbf{x}^*(\lambda)= 0
\end{equation}%
where $\mathcal{P}_{\mathcal{B}}(\cdot)$ denotes the projection
operator onto $\mathcal{B}$. Simplification of
\eqref{eq:th_proj_e_x_lambda} results%
\begin{equation}%
\label{eq:th_proj_e_x_lambda2}%
    \mathcal{P}_{\mathcal{B}}
    \big(\textbf{y} - \lambda \textbf{a}\big) - \textbf{x}^*(\lambda)= 0
\end{equation}%
since for an arbitrary vector $\mathbf{v} \in \mathbb{R}^n$,
$\mathcal{P}_{\mathcal{B}}(\mathbf{v})$ is equal to
$\mathtt{mid}(\mathbf{l}, \mathbf{v}, \mathbf{u})$, the solution of
equation \eqref{eq:th_proj_e_x_lambda2} can be written in the
following
explicit form%
\begin{equation}%
\label{eq:x_i_lambda}%
\mathbf{x}^*(\lambda) = \mathtt{mid} (\mathbf{l}, \ \mathbf{y} -
\lambda \mathbf{a}, \ \mathbf{u})
\end{equation}%
considering the necessary optimality conditions of
\eqref{eq:proj_e}, $\lambda^*$ is equal to unique zero of the
following non-smooth equation%
\begin{equation}%
\label{eq:e_lambda}%
h(\lambda) = b - \sum_{i=1}^n a_i x_i^*(\lambda)
\end{equation}%
which completes the proof.
\end{proof}

Therefore, finding the unique solution of single parameter
non-smooth equation $h(\lambda) = 0$ is the main step to solve
\eqref{eq:proj_e}. Function $h(\lambda)$ is a piecewise linear
function with at most $2n$ breakpoints. The corresponding $\lambda$
of these breakpoints can be shown by set%
\[ \mathcal{T}_\lambda = \{\ \lambda_i^l, \ \lambda_i^u \ | \  i=1,
\ldots, n; \ a_i \neq 0 \}\]%
where%
\[
\lambda_i^l = (y_i- l_i)/a_i,  \quad \lambda_i^u = (y_i- u_i)/a_i,
\quad i=1, \ldots, n,
\]%
it is evident that $\lambda_i^u \leqslant \lambda_i^l$ for $a_i>0$
and $\lambda_i^l \leqslant \lambda_i^u$ for $a_i<0$. Considering
$\lambda$ coordinates of breakpoints, for $a_i> 0$, $x_i(\lambda)$
can be expressed in the following form, %
\begin{equation}%
\label{eq:x_i_lambdap}%
    x_i(\lambda)=%
    \left\{%
\begin{array}{lll}
      u_i, & \mathtt{if} & \lambda \leqslant \lambda_i^u,\\
      y_i - \lambda a_i,
      & \mathtt{if} &  \lambda_i^u \leqslant \lambda  \leqslant \lambda_i^l,\\
      l_i,& \mathtt{if} & \lambda \geqslant \lambda_i^l.
\end{array}%
\right.%
\end{equation}%
in the same way for $a_i< 0$, we have
\begin{equation}%
\label{eq:x_i_lambdam}%
    x_i(\lambda)=%
    \left\{%
\begin{array}{lll}
      u_i, & \mathtt{if} & \lambda \geqslant \lambda_i^u,\\
      y_i - \lambda a_i,
      & \mathtt{if} &  \lambda_i^l \leqslant \lambda  \leqslant \lambda_i^u,\\
      l_i,& \mathtt{if} & \lambda \leqslant \lambda_i^l.
\end{array}%
\right.%
\end{equation}%
note that for $a_i=0$, $x_i$ is independent from $\lambda$ and is
equal to $\mathtt{mid}(l_i, \ y_i, \ u_i)$. In this case, it is
possible to consider a reduced counterpart of the original problem.
Therefore, without loss of generality, we can consider $a_i \neq 0$.
Considering \eqref{eq:x_i_lambdap} and \eqref{eq:x_i_lambdap},
$x_i(\lambda)$ is a continuous piecewise linear and monotonically
nonincreasing function of $\lambda$ for $a_i>0$; and in the same
way; $x_i(\lambda)$ is a continuous piecewise linear and
monotonically nondecreasing function of $\lambda$ for $a_i<0$.
Therefore, according to \eqref{eq:x_i_lambda} and
\eqref{eq:e_lambda},  $h(\lambda)$ is a continuous piecewise linear
and monotonically nonincreasing function of $\lambda$. It is obvious
that finding the root of \eqref{eq:e_lambda} is the most expensive
part of projection in this section.

In \cite{kiwiel2008breakpoint} a median based breakpoint searching
algorithm was introduced to find the unique root of
\eqref{eq:e_lambda}. This algorithm is somehow bisecting the set of
breakpoints (consider the traditional interval bisection method).
Although, by using O(n) median finding method, the computational
complexity of this algorithm is O(n), the proportionality constant
 and the amount of operations per step is relatively large. In
\cite{dai2006new} a two-phase algorithm was suggested to find the
root of \eqref{eq:e_lambda}. In the first stage, the bracketing
phase, an interval $[\lambda_L, \lambda_R]$ where $h(\lambda_L)\cdot
h(\lambda_R)<0$ is found. Then in the second phase, secant step, the
root of \eqref{eq:e_lambda} is computed by an accelerated secant
algorithm. In the present study, we use a combination of the
bisection and quadratic interpolation methods to find the root of
\eqref{eq:e_lambda}.

\begin{proposition}%
\label{propos:root_interval}
Assume %
$\lambda_L = \min \{\ \lambda_i^l, \ \lambda_i^u \ | \ i=1, \ldots,
n; \ a_i \neq 0 \}$ %
and %
$\lambda_R = \max \{\ \lambda_i^l, \ \lambda_i^u \ | \ i=1, \ldots,
n; \ a_i \neq 0 \}$ %
then $h(\lambda_L)\cdot h(\lambda_R) < 0$ or $h(\lambda_L)\cdot
h(\lambda_R) = 0$, i.e., the root of \eqref{eq:e_lambda} happens
within the interval $(\lambda_L, \lambda_R)$ or it is equal to
either of $\lambda_L$ and $\lambda_R$. Moreover, $h(\lambda)
\leqslant 0$ for $\lambda \leqslant \lambda_L$ and $h(\lambda)
\geqslant 0$ for $\lambda \geqslant \lambda_R$.
\end{proposition}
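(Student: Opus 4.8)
The plan is to establish the two ``tail'' assertions first --- that $h(\lambda)\le 0$ for every $\lambda\le\lambda_L$ and $h(\lambda)\ge 0$ for every $\lambda\ge\lambda_R$ --- from which the sign condition $h(\lambda_L)\,h(\lambda_R)\le 0$ and the localization of the root in $[\lambda_L,\lambda_R]$ follow immediately from the continuity of $h$.

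For the first tail, I would simply unwind the definitions, recalling that we may assume $a_i\neq 0$ for all $i$. Since $\lambda_L$ is the minimum of the \emph{entire} breakpoint set $\{\lambda_i^l,\lambda_i^u\}$, every $\lambda\le\lambda_L$ satisfies $\lambda\le\lambda_i^u$ and $\lambda\le\lambda_i^l$ simultaneously, for each $i$. Substituting into the branch description \eqref{eq:x_i_lambdap} for the indices with $a_i>0$ forces $x_i^*(\lambda)=u_i$, and substituting into \eqref{eq:x_i_lambdam} for the indices with $a_i<0$ forces $x_i^*(\lambda)=l_i$; with the notation $a_i^+=\max\{a_i,0\}$, $a_i^-=\min\{a_i,0\}$ this means $a_i x_i^*(\lambda)=a_i^+u_i+a_i^-l_i$ for every $i$, so by \eqref{eq:e_lambda}
\[
h(\lambda)=b-\sum_{i=1}^n\bigl(a_i^+u_i+a_i^-l_i\bigr),\qquad\text{for all }\lambda\le\lambda_L,
\]
and the right-hand side is $\le 0$ by the upper inequality of Proposition \ref{propos:fs}. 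The argument for the second tail is the mirror image: for $\lambda\ge\lambda_R$ we have $\lambda\ge\lambda_i^l$ and $\lambda\ge\lambda_i^u$ for every $i$, so \eqref{eq:x_i_lambdap}--\eqref{eq:x_i_lambdam} give $x_i^*(\lambda)=l_i$ when $a_i>0$ and $x_i^*(\lambda)=u_i$ when $a_i<0$, whence $h(\lambda)=b-\sum_{i=1}^n\bigl(a_i^+l_i+a_i^-u_i\bigr)\ge 0$ by the lower inequality of Proposition \ref{propos:fs}.

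Combining $\lambda_L\le\lambda_R$ with $h(\lambda_L)\le 0\le h(\lambda_R)$ then yields $h(\lambda_L)\,h(\lambda_R)\le 0$, and since $h$ is continuous (indeed continuous piecewise linear, as already noted) the intermediate value theorem places a zero of $h$ in $[\lambda_L,\lambda_R]$, the zero coinciding with an endpoint exactly when the corresponding bound in Proposition \ref{propos:fs} is tight. I do not anticipate a genuine obstacle: the only step that requires care is the bookkeeping in the middle paragraph, namely correctly pairing the sign of each $a_i$ with the outermost branch of the piecewise-linear profile $x_i^*(\cdot)$ and then with the appropriate one of the two feasibility bounds; I would therefore write the breakpoint orientation ($\lambda_i^u\le\lambda_i^l$ for $a_i>0$, $\lambda_i^l\le\lambda_i^u$ for $a_i<0$) explicitly before substituting into \eqref{eq:e_lambda}.
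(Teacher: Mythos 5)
Your argument is correct, but it is not the route the paper takes. The paper's proof is a one-line appeal to the qualitative facts that $h$ is continuous, piecewise linear, monotone, and (by Theorem \ref{th:proj_e}) has a unique root: since all breakpoints lie in $[\lambda_L,\lambda_R]$, $h$ is constant on each of the two outer rays, so a unique root cannot sit strictly outside that interval, and monotonicity then fixes the signs of the tails. You instead evaluate $h$ explicitly on the tails: for $\lambda\le\lambda_L$ every component is saturated at its ``outer'' bound, giving $h(\lambda)=b-\sum_i(u_ia_i^++l_ia_i^-)\le 0$, and symmetrically for $\lambda\ge\lambda_R$, with the sign supplied by the feasibility inequalities of Proposition \ref{propos:fs}. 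Your version buys two things the paper's does not: it \emph{proves} existence of a root in $[\lambda_L,\lambda_R]$ via the intermediate value theorem rather than importing it from Theorem \ref{th:proj_e}, and it is immune to a sign slip in the paper --- with $h(\lambda)=b-\sum_i a_ix_i^*(\lambda)$ each term $-a_ix_i^*(\lambda)$ is nondecreasing, so $h$ is monotonically \emph{nondecreasing}, not nonincreasing as the paper asserts (the paper's stated monotonicity would actually reverse the tail signs). Two small points to tighten: Proposition \ref{propos:fs} is stated only as ``inequalities $\Rightarrow$ non-empty,'' whereas you use the converse (non-emptiness of $\mathcal{D}_{\mathcal{E}}$ forces the inequalities); this converse is true, since $\mathbf{a}^T\mathbf{x}$ ranges exactly over that interval as $\mathbf{x}$ ranges over $\mathcal{B}$, but you should say so explicitly. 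And your final remark should be phrased as ``$h(\lambda_L)=0$ iff the upper bound in Proposition \ref{propos:fs} is tight'' (and dually for $\lambda_R$), which is exactly what your tail formulas give.
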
%

\begin{proof}%
Since $h(\lambda)$ is a continuous piecewise linear and
monotonically nonincreasing function of $\lambda$ and it has a
unique root, we should have $h(\lambda) \leqslant 0$ for $\lambda
\leqslant \lambda_L$ and $h(\lambda) \geqslant 0$ for $\lambda
\geqslant \lambda_R$. So the root of $h(\lambda)$ happens within the
interval $[\lambda_L, \lambda_R]$.
\end{proof}%

Considering the Proposition \ref{propos:root_interval}, the
bracketing phase used in \cite{dai2006new} is not required as it is
a-priori known. Therefore, it is possible to find the root of
$h(\lambda)$ by the traditional interval bisection method.

Assume that the error at the $k$-th step of the bisection algorithm
is shown by $\epsilon_k$, where $\epsilon_0 =
(\lambda_R-\lambda_L)/2$. Then $\epsilon_{k+1} = \epsilon_k/2$.
Therefore the number of bisection steps to find $\lambda^*$ within
tolerance $\epsilon$ is equal to $\log_2 (\epsilon_0/\epsilon)$.
Considering the limited arithmetic precision of computers, the upper
bound on number of bisection steps is priori known and is
independent from $n$. Notice that the number of set bisection in
\cite{kiwiel2008breakpoint} is function of $n$. Therefore, for large
$n$, the worst case complexity of the interval bisection method
should be better than the worst case complexity of the set bisection
algorithm of \cite{kiwiel2008breakpoint}. Moreover, when the
interval became sufficiently small (in the interval bisection
algorithm), it is possible to do a linear interpolation between
three consecutive available points to find a trial root. Then
discarding the algorithm if the trial root is zero of $h(\lambda)$.

In contrast to alternative methods, the bisection method is the most
robust algorithm and is enable to find the root within the machine
precision in a finite number of steps. However, its linear rate of
convergence is cumbersome. In the present study we use the algorithm
introduced in  \cite{brent2002algorithms} to find zero of
$h(\lambda)$. This algorithm is a combination of the bisection and
the inverse quadratic interpolation methods. By this strategy, Brent
algorithm benefits from the good convergence-rate of quadratic
interpolation and the robustness of the bisection method. Prior to
describing this method, it is worth to state the following remark to
decrease the computational complexity (hopefully with a $\log_2$
slope) in the course of iterations. Assume that the current search
interval is shown by $[\lambda_l, \lambda_r]$.
\begin{remark}%
Considering \eqref{eq:x_i_lambdap} and \eqref{eq:x_i_lambdam}
together with the current value of $[\lambda_l, \lambda_r]$, it is
possible to reduce the computational cost by freezing the components
of $\mathbf{x}$ vector for which $x_i$ is determined. More clearly,
when $a_i>0$, $x_i^*(\lambda)$ can be fixed to $u_i$ or $l_i$ if
$\lambda_r \leqslant \lambda_i^u$ or $\lambda_l \geqslant
\lambda_i^l$ respectively; and when $a_i<0$, $x_i^*(\lambda)$ can be
fixed to $u_i$ or $l_i$ if $\lambda_l \leqslant \lambda_i^u$ or
$\lambda_r \geqslant \lambda_i^l$ respectively.
\end{remark}%

Consider $\epsilon_M \in \mathbb{R}^+$ as the machine precision, we
are looking for the unique root of $h(\lambda)$ within precision
$\epsilon \in \mathbb{R}^+$ $(\epsilon > \epsilon_M)$. The machine
precision can be computed by algorithm 665 of ACM TOMS
\cite{Cody1988a}. In the root finding algorithm in this study we
have three points $\lambda_a$, $\lambda_b$ and $\lambda_c$ such that
$h(\lambda_b)\cdot h(\lambda_c) \leqslant 0$, $|h(\lambda_b)|
\leqslant |h(\lambda_c)|$, and $\lambda_a$ may coincide with
$\lambda_c$. Initially $\lambda_b = \lambda_L$, $\lambda_c =
\lambda_R$ and $\lambda_a = \lambda_c$. The point $\lambda_b$ is
considered as the best approximation to $\lambda^*$ in the course of
iterations.

Consider $\Delta_\lambda = (\lambda_c-\lambda_b)/2$. If
$\Delta_\lambda \leqslant \epsilon$ the value of $\lambda_b$ is
returned as an approximation to $\lambda_*$, else an inverse
quadratic interpolation is used to compute a trial approximation for
the root of function $h$. For the convenience, $h(\lambda_x)$ is
shown by $h_x$ henceforth. Assume we have three distinct points
$(\lambda_a, h_a)$, $(\lambda_b, h_b)$ and $(\lambda_c, h_c)$, then
the quadratic lagrange interpolation formulae
can be written as follows%
\[
\lambda = %
\frac%
{(h_\lambda - h_b)(h_\lambda - h_c)}%
{(h_a - h_b)(h_a - h_c)}\ \lambda_a%
+%
\frac%
{(h_\lambda - h_c) (h_\lambda - h_a)}%
{(h_b - h_c)(h_b - h_a)}\ \lambda_b%
+%
\frac%
{(h_\lambda - h_a)(h_\lambda - h_b)}%
{(h_c - h_a)(h_c - h_b)}\ \lambda_c%
\]
The root of this quadratic approximation, $\lambda_t$, can be
written in the
following explicit form%
\begin{equation}%
\label{eq:lambda_trial}%
    \lambda_t = \lambda_b + p/q
\end{equation}
where
$r = h_b/h_c$, $s=h_b/h_a$, $t = h_a/h_c$, $q = \pm (t-1)(r-1)(s-1)$
and $p = \pm s t (r-t)(\lambda_c - \lambda_b) - s (1-r) (\lambda_b -
\lambda_a)$. At the start of iterations in which there are only two
distinct points, a linear interpolation is used to find the trial
root. When $q \approx 0$ the overflow problem may cause to the
failure of computation. Therefore, the trial root $\lambda_t$ will
be rejected in this case. Moreover, when the interpolating parabola
has two roots between $\lambda_b$ and $\lambda_c$ or when its root
is located outside of this interval, the interpolation is poor and
the trial root $\lambda_t$ should be rejected. In the case of
inefficient step $(|p| \leqslant \epsilon |q|)$ or when the current
$p/q$ is greater than half of its previous value, the trial root
$\lambda_t$ will be rejected to avoid the slow convergence. Anyway
the trail root which is resulted from the inverse interpolation be
rejected, a bisection step will be performed. In summary, if either
of the following conditions is met the trial root of the inverse
quadratic interpolation is rejected and a bisection step is used
instead
\[
|p| \geqslant 2/3 \ |q \Delta_\lambda|, \quad  |p| \leqslant
\epsilon |q|, \quad |p/q| \geqslant 1/2 \ |p/q|_{old}
\]

The stopping criteria is a critical issue to avoid excess
computations when rounding errors prevent further progress toward
the exact solution in the vicinity of root. Following
\cite{wilkinson1994rounding}, the following relation is used as the
stopping tolerance in this study%
\[
tol = 2 \epsilon_M |\lambda_b| + \epsilon/2
\]
Note that the above criterion does not tell us how close we are to
the root, but only that we are in some interval about the zero where
roundoff error may be dominating our calculations.

 According to our numerical experiments, this method finds
the desired root within the machine precision by a fewer function
evaluations in contrast to mentioned alternatives (in our experience
the number of function evaluation calls were usually below 12 for
double-precision arithmetic and  $\epsilon=1.e-15$).

\begin{remark}%
Similar to new line-search algorithm introduced in
\cite{hager2005new}, the mentioned root finding method robustly
tolerates the limited machine precision; simultaneously remains
efficient as much as possible. Therefore, it has a good potential to
be adapted as an alternative robust line-search method. In
particular, in contrast to that of \cite{hager2005new} which uses
inverse linear interpolations (secant steps), it uses inverse
quadratic interpolations without additional function evaluation.
\end{remark}%


Now, consider the solution of \eqref{eq:proj} with
$\mathcal{D}=\mathcal{D}_\mathcal{I}$ which is equivalent to the
following
QP problem%
\begin{equation}%
\label{eq:proj_i}%
    \min \quad \frac 12\ \mathbf{x}^T \mathbf{I} \ \mathbf{x} -
    \mathbf{y}^T \mathbf{x} %
    \quad \mathtt{s.t.:} \quad b_l \leqslant \mathbf{a}^T \mathbf{x} \leqslant b_u,%
    \quad \mathbf{l} \leqslant \mathbf{x} \leqslant \mathbf{u},%
\end{equation}

\begin{proposition}%
\label{propos:fsi}%
The feasible set $\mathcal{D}_\mathcal{I}$ is non-empty if the
following conditions hold%
\[%
\sum_{i=1}^{n} ( u_i a_i^- + l_i a_i^+)%
\leqslant b_l \leqslant b_u \leqslant %
\sum_{i=1}^{n} ( u_i a_i^+ + l_i a_i^- )%
\]%
\end{proposition}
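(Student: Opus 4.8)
The plan is to reduce the statement to an analysis of the range of the linear functional $\mathbf{x}\mapsto\mathbf{a}^T\mathbf{x}$ over the box $\mathcal{B}$, exactly in the spirit of Proposition~\ref{propos:fs}. First I would compute the two extreme values
\[
m:=\min_{\mathbf{x}\in\mathcal{B}}\mathbf{a}^T\mathbf{x},\qquad M:=\max_{\mathbf{x}\in\mathcal{B}}\mathbf{a}^T\mathbf{x}.
\]
Since $\mathbf{a}^T\mathbf{x}=\sum_i a_i x_i$ is separable, each coordinate is optimized independently over $[l_i,u_i]$: for $a_i>0$ the term $a_i x_i$ is minimized at $x_i=l_i$ and maximized at $x_i=u_i$; for $a_i<0$ it is minimized at $x_i=u_i$ and maximized at $x_i=l_i$; for $a_i=0$ it is constant. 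Writing these three cases uniformly with $a_i^+=\max\{a_i,0\}$ and $a_i^-=\min\{a_i,0\}$ yields
\[
m=\sum_{i=1}^n\bigl(u_i a_i^-+l_i a_i^+\bigr),\qquad M=\sum_{i=1}^n\bigl(u_i a_i^++l_i a_i^-\bigr),
\]
so the hypothesis of the proposition is precisely $m\leqslant b_l\leqslant b_u\leqslant M$, i.e. $[b_l,b_u]\subseteq[m,M]$.

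Second, I would use that $\mathcal{B}$ is convex, hence connected, and that $\mathbf{x}\mapsto\mathbf{a}^T\mathbf{x}$ is continuous, so by the intermediate value theorem its image is the full interval $[m,M]$. In particular there exists $\bar{\mathbf{x}}\in\mathcal{B}$ with $\mathbf{a}^T\bar{\mathbf{x}}=b_l\in[m,M]$; this $\bar{\mathbf{x}}$ then satisfies $\mathbf{l}\leqslant\bar{\mathbf{x}}\leqslant\mathbf{u}$ and $b_l\leqslant\mathbf{a}^T\bar{\mathbf{x}}=b_l\leqslant b_u$, so $\bar{\mathbf{x}}\in\mathcal{D}_\mathcal{I}$ and the set is non-empty. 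Equivalently, and even more briefly, one can invoke Proposition~\ref{propos:fs} with $b:=b_l$: its hypothesis $m\leqslant b_l\leqslant M$ holds a fortiori, so $\mathcal{D}_\mathcal{E}$ for this choice of $b$ is non-empty, and $\mathcal{D}_\mathcal{E}\subseteq\mathcal{D}_\mathcal{I}$ gives the conclusion.

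There is essentially no serious obstacle here; the statement is elementary and mirrors Proposition~\ref{propos:fs}. The only point that needs a little care is the sign bookkeeping in the coordinatewise optimization, i.e. correctly swapping the roles of $l_i$ and $u_i$ between the cases $a_i>0$ and $a_i<0$ so that the formulas for $m$ and $M$ come out with the stated arrangement of $a_i^+$ and $a_i^-$. As with Proposition~\ref{propos:fs}, one may alternatively argue purely geometrically, observing that the slab $\{b_l\leqslant\mathbf{a}^T\mathbf{x}\leqslant b_u\}$ intersects the box $\mathcal{B}$ as soon as $\mathcal{B}$ does not lie strictly on one side of it, and cite equation~2.6 of~\cite{dai2006new}.
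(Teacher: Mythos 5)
Your proof is correct and takes essentially the same route as the paper, which simply reduces the claim to the equality-constrained case (the paper's one-line proof cites the preceding proposition, though with a typographical self-reference to \ref{propos:fsi} where \ref{propos:fs} is clearly intended). Your explicit computation of the extremes $m$ and $M$ of $\mathbf{a}^T\mathbf{x}$ over $\mathcal{B}$ and the observation that $\mathcal{D}_\mathcal{E}\subseteq\mathcal{D}_\mathcal{I}$ for $b=b_l$ just fill in the details the paper leaves implicit.
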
%

\begin{proof}
The proof is directly followed from proposition \ref{propos:fsi}.
\end{proof}

The following theorem provides an elegant method to compute the
solution of \eqref{eq:proj_i} using results of theorem
\ref{th:proj_e}.

\begin{theorem}%
\label{th:proj_ie}%
Assume that the feasible set $\mathcal{D}_{\mathcal{I}}$ is
non-empty. Then problem \eqref{eq:proj_i} has a unique solution
$\mathbf{x}^*$ which is computed by the following relation%
\begin{equation}%
\label{eq:solution_proj_i}%
\mathbf{x}^* = \mathtt{mid} (\mathbf{x}_L^*, \ \mathbf{y}, \ \mathbf{x}_U^*)%
\end{equation}%
where $\mathbf{x}_L^*, \mathbf{x}_U^* \in \mathbb{R}^n$ are unique
solutions of the following QP problems%
\begin{align}%
\label{eq:proj_i_l}%
    \mathbf{x}_L :=& \ \min \quad \frac 12\ \mathbf{x}^T \mathbf{I} \ \mathbf{x} -
    \mathbf{y}^T \mathbf{x} %
    \quad \mathtt{s.t.:} \quad  \mathbf{a}^T \mathbf{x} = b_l,%
    \quad \mathbf{l} \leqslant \mathbf{x} \leqslant \mathbf{u},%
\\
\label{eq:proj_i_u}%
    \mathbf{x}_U := & \ \min \quad \frac 12\ \mathbf{x}^T \mathbf{I} \ \mathbf{x} -
    \mathbf{y}^T \mathbf{x} %
    \quad \mathtt{s.t.:} \quad \mathbf{a}^T \mathbf{x} = b_u,%
    \quad \mathbf{l} \leqslant \mathbf{x} \leqslant \mathbf{u},%
\end{align}
\end{theorem}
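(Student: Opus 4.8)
The plan is to reduce the inequality-constrained projection \eqref{eq:proj_i} to the two equality-constrained projections \eqref{eq:proj_i_l} and \eqref{eq:proj_i_u}, for each of which Theorem~\ref{th:proj_e} already guarantees existence, uniqueness, and the explicit mid-form characterization. Existence and uniqueness for \eqref{eq:proj_i} itself is immediate: the objective is strictly convex (the Hessian is $\mathbf{I}$) and $\mathcal{D}_{\mathcal{I}}$ is a nonempty closed convex set, so the projection $\mathbf{x}^*=\mathcal{P}_{\mathcal{D}_{\mathcal{I}}}(\mathbf{y})$ exists and is unique; the same holds for $\mathbf{x}_L^*$ and $\mathbf{x}_U^*$ under the non-emptiness assumption (here one also needs to observe that non-emptiness of $\mathcal{D}_{\mathcal{I}}$ together with Proposition~\ref{propos:fs} forces $\mathcal{D}_{\mathcal{E}}$ with $b=b_l$ and with $b=b_u$ to be non-empty, since $\sum_i(u_i a_i^-+l_i a_i^+)\leqslant b_l\leqslant b_u\leqslant\sum_i(u_i a_i^+ + l_i a_i^-)$). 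So the real content is the identity \eqref{eq:solution_proj_i}.

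First I would dispose of the easy case: if $b_l\leqslant \mathbf{a}^T\bar{\mathbf{x}}\leqslant b_u$ where $\bar{\mathbf{x}}=\mathtt{mid}(\mathbf{l},\mathbf{y},\mathbf{u})=\mathcal{P}_{\mathcal{B}}(\mathbf{y})$ is the unconstrained (box-only) projection, then $\bar{\mathbf{x}}$ is already feasible for \eqref{eq:proj_i} and is trivially optimal, and I would check that in this case the right-hand side of \eqref{eq:solution_proj_i} also collapses to $\bar{\mathbf{x}}$. Otherwise, exactly one of the linear inequalities is violated at $\bar{\mathbf{x}}$; by symmetry assume $\mathbf{a}^T\bar{\mathbf{x}}>b_u$. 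The claim is then that $\mathbf{x}^*=\mathbf{x}_U^*$, i.e. the active inequality becomes the equality $\mathbf{a}^T\mathbf{x}=b_u$. To prove this I would use the KKT conditions for \eqref{eq:proj_i}: the multiplier $\mu_l\geqslant 0$ for $b_l-\mathbf{a}^T\mathbf{x}\leqslant 0$ and $\mu_u\geqslant 0$ for $\mathbf{a}^T\mathbf{x}-b_u\leqslant 0$ satisfy $\mathbf{x}^*=\mathtt{mid}(\mathbf{l},\mathbf{y}-(\mu_u-\mu_l)\mathbf{a},\mathbf{u})$, and a monotonicity/sign argument (the map $\lambda\mapsto \mathbf{a}^T\mathtt{mid}(\mathbf{l},\mathbf{y}-\lambda\mathbf{a},\mathbf{u})$ is nonincreasing, as established in the discussion following Theorem~\ref{th:proj_e}) shows that if $\mathbf{a}^T\bar{\mathbf{x}}>b_u$ then the root $\lambda^*$ of $h_{b_u}(\lambda)=b_u-\mathbf{a}^T\mathtt{mid}(\mathbf{l},\mathbf{y}-\lambda\mathbf{a},\mathbf{u})$ is strictly positive, hence $\mu_u>0$, $\mu_l=0$ by complementary slackness (the lower constraint is strictly inactive), and $\mathbf{x}^*$ coincides with the KKT point of \eqref{eq:proj_i_u}, i.e. $\mathbf{x}^*=\mathbf{x}_U^*$. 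The symmetric case gives $\mathbf{x}^*=\mathbf{x}_L^*$.

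It remains to verify that in each of the three cases the formula $\mathtt{mid}(\mathbf{x}_L^*,\mathbf{y},\mathbf{x}_U^*)$ returns the right vector. The key structural fact I would establish is a componentwise ordering: $\mathbf{x}_L^*\leqslant\bar{\mathbf{x}}\leqslant\mathbf{x}_U^*$ when the constraint $b_l$ is ``tighter'' in the appropriate sense, or more precisely that $x^*_{L,i}$, $\bar x_i$, $x^*_{U,i}$ are monotone in the shift parameter $\lambda$ (decreasing if $a_i>0$, increasing if $a_i<0$), combined with $\lambda_L^*\geqslant 0\geqslant \lambda_U^*$ is false in general — so instead I would argue directly per component that $\mathtt{mid}(x^*_{L,i},y_i,x^*_{U,i})$ equals $\bar x_i$ in the interior case, equals $x^*_{U,i}$ when $\mathbf{a}^T\bar{\mathbf{x}}>b_u$ (using $x^*_{U,i}$ lies between $y_i$ and $\bar x_i$, hence between $\mathbf{l},\mathbf{u}$, and that $x^*_{L,i}$ is on the far side), and symmetrically for the other case.

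The main obstacle will be this last bookkeeping: showing that the three-way median picks out exactly $\mathbf{x}_U^*$ (resp.\ $\mathbf{x}_L^*$) in the active case, which requires knowing the sign relationship between $\mathbf{x}_L^*$, $\mathbf{y}$, $\mathbf{x}_U^*$ componentwise. This follows from the fact that the equality-projection multipliers satisfy $\lambda^*_{b_l}\geqslant\lambda^*_{b_u}$ (because $h_{b_l}(\lambda)\leqslant h_{b_u}(\lambda)$ pointwise and both $h$'s are nonincreasing, so the root of $h_{b_l}$ is $\geqslant$ the root of $h_{b_u}$ — wait, $h_{b_l}=b_l-\mathbf{a}^T\mathtt{mid}(\cdots)\leqslant b_u-\mathbf{a}^T\mathtt{mid}(\cdots)=h_{b_u}$, and since each is nonincreasing, $h_{b_l}$ crosses zero at a smaller $\lambda$, i.e. $\lambda^*_{b_l}\leqslant\lambda^*_{b_u}$), from which the componentwise monotonicity of $\mathbf{x}^*(\lambda)$ in \eqref{eq:x_i_lambdap}--\eqref{eq:x_i_lambdam} yields the needed interlacing of $\mathbf{x}_L^*$ and $\mathbf{x}_U^*$ around $\mathbf{y}$. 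Once that interlacing is in hand, the three cases close by direct inspection of the median.
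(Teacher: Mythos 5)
Your overall route is genuinely different from the paper's. The paper never splits into cases: it forms the Lagrangian with both multipliers $\mu_l,\mu_u$, collapses them to the single parameter $\mu=\mu_l+\mu_u$ so that $\mathbf{x}^*(\mu)=\mathtt{mid}(\mathbf{l},\mathbf{y}-\mu\mathbf{a},\mathbf{u})$, uses the monotonicity of $\mu\mapsto\mathbf{a}^T\mathbf{x}^*(\mu)$ to locate $\mu^*$ in the bracket $[\lambda_u^*,\lambda_l^*]$ of the two equality multipliers (its \eqref{eq:x_lambda_in5}), deduces the squeeze \eqref{eq:x_lambda_in6}, and then argues that once the box $[\mathbf{l},\mathbf{u}]$ is replaced by $[\mathbf{x}_L^*,\mathbf{x}_U^*]$ the two linear inequalities become redundant, so \eqref{eq:proj_i} reduces to the pure box QP \eqref{eq:proj_i_reduced} with the median as its explicit solution. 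Your three-way case analysis on which linear constraint is violated at $\mathcal{P}_{\mathcal{B}}(\mathbf{y})$, followed by the KKT/complementary-slackness identification $\mathbf{x}^*=\mathbf{x}_U^*$ (resp.\ $\mathbf{x}_L^*$) via the sign of the equality multiplier, is a legitimate and arguably more transparent alternative; the price is the final componentwise verification that the median returns the right vector in each case, which you rightly flag as the real work (and which the paper's squeeze argument also only sketches).

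There is, however, one concrete error you must fix: the direction of monotonicity of $h_b(\lambda)=b-\mathbf{a}^T\mathtt{mid}(\mathbf{l},\mathbf{y}-\lambda\mathbf{a},\mathbf{u})$. Each term $a_i\,\mathtt{mid}(l_i,y_i-\lambda a_i,u_i)$ is nonincreasing in $\lambda$ regardless of the sign of $a_i$, so $h_b$ is non\emph{de}creasing (the paper's prose mislabels it, but its conclusion \eqref{eq:x_lambda_in5} uses $h(\mu)=\mathbf{a}^T\mathbf{x}^*(\mu)$, which really is nonincreasing). Consequently your first instinct was correct and your ``correction'' is backwards: $h_{b_l}\leqslant h_{b_u}$ pointwise with both nondecreasing gives $\lambda^*_{b_l}\geqslant\lambda^*_{b_u}$, which is also the only ordering consistent with your own case-2 argument (where $h_{b_u}(0)<0$ forces $\lambda^*>0$ precisely because $h_{b_u}$ is nondecreasing). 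With the reversed ordering $\lambda^*_{b_l}\leqslant\lambda^*_{b_u}$ the interlacing of $\mathbf{x}_L^*$, $\mathbf{y}$, $\mathbf{x}_U^*$ comes out wrong and the median bookkeeping does not close. Two smaller caveats: the componentwise ordering of $x^*_{L,i}$ and $x^*_{U,i}$ flips with the sign of $a_i$, so the final inspection of the median must be done separately for $a_i>0$ and $a_i<0$ (the paper's \eqref{eq:x_lambda_in6} glosses over this too); and non-emptiness of $\mathcal{D}_{\mathcal{I}}$ does \emph{not} by itself imply the two-sided inequality of Proposition \ref{propos:fsi} --- it only guarantees that $[b_l,b_u]$ meets the range of $\mathbf{a}^T\mathbf{x}$ over $\mathcal{B}$ --- so the feasibility of \eqref{eq:proj_i_l} and \eqref{eq:proj_i_u} needs the stronger hypothesis rather than following automatically as you assert.
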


\begin{proof}
The existence and uniqueness of solutions for problems
\eqref{eq:proj_i}, \eqref{eq:proj_i_l} and \eqref{eq:proj_i_u} are
directly followed by the strict convexity of related problems and
the non-emptiness assumption of $\mathcal{D}_{\mathcal{I}}$.

Same as previous, we reformulate \eqref{eq:proj_i} as a bound
constrained optimization problem by augmenting the linear inequality
constraints to the objective function%
\[
\mathcal{L}[\textbf{x}; \mu_l; \mu_u]= \frac 12\
\mathbf{x}^T \mathbf{I} \ \mathbf{x} - %
\mathbf{y}^T \mathbf{x} + %
\mu_l (\textbf{a}^T \textbf{x} - b_l) + %
\mu_u (\textbf{a}^T \textbf{x} - b_u) %
\quad %
\mathtt{s.t.:} %
\quad \mathbf{l} \leqslant \mathbf{x} \leqslant \mathbf{u}%
\]
where $\mu_l, \mu_u \in \mathbb{R}$ are the lagrange multipliers
corresponding to the inequality constraints $\textbf{a}^T \textbf{x}
\geqslant b_l$ and $\textbf{a}^T \textbf{x} \leqslant b_u$
respectively. It is evident that the unique stationary point of
Lagrangian $\mathcal{L}[\textbf{x}; \mu_l; \mu_u]$ constrained by
box $\mathcal{B}$ which is shown by $(\textbf{x}^*, \mu_l^*,
\mu_u^*)$ is equal to the unique solution of \eqref{eq:proj_i}.

For fixed values of $\mu_l$ and $\mu_u$, the bound constrained
stationary point of Lagrangian $\mathcal{L}$, i.e.,
$\textbf{x}^*(\mu_l, \mu_u)$, can be computed by the projected
gradient method. Using optimality conditions based on the projected
gradient method, $\textbf{x}^*(\mu_l, \mu_u)$ is equal to the unique
zero of the projected gradient (with respect to \textbf{x}) of
Lagrangian
$\mathcal{L}$. Using the same way like \eqref{eq:th_proj_e_x_lambda} results%
\begin{equation}%
\label{eq:x_lambda_in}%
\mathbf{x}^*(\mu_l, \mu_u) = \mathtt{mid} \big(\mathbf{l}, \
\mathbf{y} - (\mu_l+\mu_u) \mathbf{a}, \mathbf{u} \big)
\end{equation}%
assuming $\mu = \mu_l+\mu_u$ simplifies \eqref{eq:x_lambda_in} to
the following form
\begin{equation}%
\label{eq:x_lambda_in2}%
\mathbf{x}^*(\mu) = \mathtt{mid}(\mathbf{l}, \ \mathbf{y} - \mu
\mathbf{a}, \mathbf{u})
\end{equation}%
note that at least either of $\mu_l$ or $\mu_u$ is zero at the
optimal solution, so, as \eqref{eq:x_lambda_in2} shows, knowing
$\mu_l+\mu_u$ at the optimal solution is sufficient to uniquely
determine $\mu_l^*$ and $\mu_u^*$. Considering the necessary
optimality conditions of \eqref{eq:proj_i}, the remaining job is to
compute the optimal value of $\mu$ (which is shown by $\mu^*$) such
that
the following inequalities are satisfied%
\begin{equation}%
\label{eq:x_lambda_in3}%
b_l \leqslant \mathbf{a}^T \mathbf{x}^*(\mu) \leqslant b_u %
\end{equation}%
or
\begin{equation}%
\label{eq:x_lambda_in4}%
b_l \leqslant \mathbf{a}^T \mathtt{mid}(\mathbf{l}, \ \mathbf{y} -
\mu \mathbf{a}, \mathbf{u}) \leqslant b_u %
\end{equation}%
due to the uniqueness of lagrange multiplier $\mu^*$ at the optimal
solution, the inequality equation \eqref{eq:x_lambda_in4} has only
one solution. Without confusion with the previous definition for
$h(\lambda)$, assume the following new definition for function $h(\mu)$%
\[
h(\mu) = \mathbf{a}^T \mathbf{x}^*(\mu)
\]%
Similar to the previous arguments on function $h(\lambda)$, it is
easy to show that $h(\mu)$ is a continuous piecewise linear and
monotonically nonincreasing function of $\mu$ with at most $2n$
breakpoints (to save space we avoid repetition of similar
discussions for $h(\mu)$).

Assume $\lambda_l^*$ and $\lambda_u^*$ are optimal lagrange
multipliers corresponding to linear equality constraint in problems
\eqref{eq:proj_i_l} and \eqref{eq:proj_i_u} respectively.
Considering the mentioned monotonicity of functions $h(\lambda)$ and
$h(\mu)$, the following inequalities are identical%
\begin{equation}%
\label{eq:x_lambda_in5}%
\lambda_u^* \leqslant  \mu^* \leqslant  \lambda_l^* %
\end{equation}%
therefore an appropriate search interval for $\mu^*$ is the bracket
$[\lambda_u^*, \lambda_l^*]$. Considering
\eqref{eq:solution_proj_e}, \eqref{eq:x_lambda_in4} and
\eqref{eq:x_lambda_in5}, the following inequalities hold%
\begin{equation}%
\label{eq:x_lambda_in6}%
\mathbf{x}^*_L \leqslant  \mathbf{x}^* \leqslant  \mathbf{x}^*_U %
\end{equation}%
notice that the inequalities are understood componentwise here.
Refereing again to the mentioned monotonicity of functions
$h(\lambda)$ and $h(\mu)$, when $\mathbf{x}^*_L \leqslant
\mathbf{x}^*$ the inequality $\textbf{a}^T \textbf{x}^* \geqslant
b_l$ always holds. In the same way, the inequality $\textbf{a}^T
\textbf{x}^* \leqslant b_u$ will be remained always satisfied when
$\mathbf{x}^* \leqslant \mathbf{x}^*_U$. Therefore, replacing bound
constraints $\mathbf{l}$ and $\mathbf{u}$ respectively with
$\mathbf{x}^*_L$ and $\mathbf{x}^*_U$ reduces problem
\eqref{eq:proj_i} to the following
bound constrained QP problem%
\begin{equation}%
\label{eq:proj_i_reduced}%
    \min \quad \frac 12\ \mathbf{x}^T \mathbf{I} \ \mathbf{x} -
    \mathbf{y}^T \mathbf{x} %
    \quad \mathtt{s.t.:} \quad \mathbf{x}^*_L \leqslant \mathbf{x} \leqslant \mathbf{x}^*_U%
\end{equation}
with the explicit solution%
\[%
\mathbf{x}^* = \mathtt{mid} (\mathbf{x}_L^*, \ \mathbf{y}, \ \mathbf{x}_U^*)%
\]%
which completes the proof.
\end{proof}

Therefore, it is possible to solve \eqref{eq:proj_i} in expense of
solving two problems with structures similar to \eqref{eq:proj_e}.
However, the asymptotic computational cost is not essentially
duplicated in this way. This is because of some shared calculations
like computing $\lambda_L$, $\lambda_R$, $h(\lambda_L)$ and
$h(\lambda_R)$. Moreover, the trajectory points produced during the
solution of \eqref{eq:proj_i_u} can be easily used to reduce the
initial search bracket to solve \eqref{eq:proj_i_l}. For instance,
if pair $(\lambda_x, h_x)$ was produced during the solution of
\eqref{eq:proj_i_u}, the corresponding $h$ to $\lambda_x$ when
solving \eqref{eq:proj_i_l} is equal to $h_x + b_l - b_u$.


\section{Null-space management of linear constraints}%
\label{sec:nullspace}%

In this section we present an O(n) method (in terms of computational
cost and consumed memory) to treat the linear constraint in knapsack
problem. For the convenience, the basic idea of the null-space
methods is recalled at the first part of this section (for more
details see chapter 5 of \cite{gill1981practical}).

Let $\mathbf{A} \in \mathbb{R}^{m\times n}$ be a full rank  matrix.
The null-space of $\mathbf{A}$ is denoted by%
\[\mathcal{N}(\mathbf{A}) = \{\mathbf{p}\in \mathbb{R}^n: \ \mathbf{A}\mathbf{p} = 0\}\]
which is the set of vectors orthogonal to the rows of $\mathbf{A}$.
The null-space of $\mathbf{A}$ is a subspace of $\mathbb{R}^n$ with
dimension $n-m$ (consider the full-rank assumption of $\mathbf{A}$).
Therefore, any linear combination of two vectors in
$\mathcal{N}(\mathbf{A})$ is also in $\mathcal{N}(\mathbf{A})$. Any
matrix $\mathbf{Z} \in \mathbb{R}^{n\times (n-m)}$ whose columns
form a basis for $\mathcal{N}(\mathbf{A})$ can be considered as a
null-space matrix for $\mathbf{A}$. It is easy to show that
$\mathbf{Z}$ satisfies $\mathbf{A}^T\mathbf{Z}=0$.

The range-space of $\mathbf{A}$ is defined as a subspace of
$\mathbb{R}^n$ with dimension $m$ which is spanned by the columns of
the $\mathbf{A}$ (the set of all linear combinations of $\mathbf{A}$
columns). In particular, we are interested in the range space of
$\mathbf{A}^T$, defined by%
\[\mathcal{R}(\mathbf{A}^T) = \{\mathbf{q}\in \mathbb{R}^n: \
\mathbf{q} = \mathbf{A}\mathbf{r} \quad \mathtt{for \ some}  \quad
\mathbf{r}\in \mathbb{R}^m\}\]
It is easy to show that $\mathcal{N}(\mathbf{A})$ and
$\mathcal{R}(\mathbf{A}^T)$ are orthogonal subspaces. Therefore, it
is possible to uniquely decompose an arbitrary vector $\mathbf{x}
\in \mathbb{R}^n$ into sum of the range-space, $\mathbf{q}\in
\mathcal{R}(\mathbf{A}^T)$, and null-space, $\mathbf{p}\in
\mathcal{N}(\mathbf{A})$, components%
\[\mathbf{x}= %
\mathbf{q}+\mathbf{p} =%
\mathbf{q} + \mathbf{Z} \mathbf{v}
 \]
where $\mathbf{v} \in \mathbb{R}^{(n-m)}$. Consider linear system of
equations $\mathbf{A}^T \mathbf{x} = \mathbf{b}$ ($\mathbf{b} \in
\mathbb{R}^m$). Assuming $\mathbf{x}_0 \in \mathbb{R}^n$ as a
particular solution for $\mathbf{A}^T \mathbf{x} = \mathbf{b}$, any
other solution $\mathbf{x}$ can be parameterized as%

\begin{equation}%
\label{eq:ns_parametrization}%
\mathbf{x} = \mathbf{x}_0 + \mathbf{Z} \mathbf{v}
\end{equation}%
In fact $\mathbf{Z} \mathbf{v}$ acts as feasible directions for
matrix $\mathbf{A}$ (note that $\mathbf{A} \mathbf{Z} \mathbf{v}=
\mathbf{0}$).

Now consider the following optimization problem%
\begin{equation}%
\label{eq:leq}%
\nonumber%
\tag{LEP}
    \min_{\mathbf{x} \in \mathbb{R}^n}
    f(\mathbf{x}) \quad \mathtt{s.t.:} \quad \mathbf{A}^T \mathbf{x} = \mathbf{b}
\end{equation}%
using \eqref{eq:ns_parametrization}, it is possible to convert the
linearly constrained optimization problem \eqref{eq:leq} on
$\mathbb{R}^n$ to the following unconstrained optimization problem
on the reduced
space $\mathbb{R}^{(n-m)}$,%
\begin{equation}%
\label{eq:reduced_leq}%
\nonumber %
\tag{RLEP}
    \min_{\mathbf{v} \in \mathcal{N}(\mathbf{A})}
    f(\mathbf{x}_0 + \mathbf{Z} \mathbf{v})
\end{equation}%
the gradient and Hessian of $f$ with respect to the reduced vector
$\mathbf{v}$ at the trial point $\mathbf{v}_0$ can be easily
computed using the chain rule, i.e.,%
\begin{align}%
\nonumber%
\nabla_\mathbf{v} f(\mathbf{x}_0 + \mathbf{Z} \mathbf{v}_0) =&
\mathbf{Z}^T \nabla_\mathbf{x} f(\mathbf{x}_0 + \mathbf{Z}
\mathbf{v}_0)\\ \nonumber%
\nabla^2_\mathbf{v} f(\mathbf{x}_0 + \mathbf{Z} \mathbf{v}_0) =
&\mathbf{Z}^T \nabla^2_\mathbf{x} f(\mathbf{x}_0 + \mathbf{Z}
\mathbf{v}_0)\mathbf{Z}%
\end{align}
the necessary and sufficient conditions for the reduced problem
\eqref{eq:reduced_leq} is same as the classical unconstrained
optimization problems in which the gradient vector and the Hessian
matrix are replaced by the reduced counterparts. Therefore, any
unconstrained optimization solver can be employed to solve
\eqref{eq:reduced_leq} without any technical difficulty.

Now, consider the following inequality constrained optimization
problem%
\begin{equation}%
\label{eq:liq}%
\nonumber%
\tag{LIP}
    \min_{\mathbf{x} \in \mathbb{R}^n}
    f(\mathbf{x}) \quad \mathtt{s.t.:} \quad \mathbf{A}^T \mathbf{x} \geqslant \mathbf{b}
\end{equation}%
Using an appropriate active set strategy, it is possible to solve
\eqref{eq:liq} with the null-space method. Assume the set of active
constraints at the local solution is known, $\hat{\mathbf{A}}$, then
the corresponding unconstrained optimization problem is solved on a
null-space spanned by only active constraints,
$\mathcal{N}(\hat{\mathbf{A}})$. In this case the caution should be
taken for inactive constraints. Assume the index set of inactive
constraints at $\mathbf{x}$ is shown by $\mathcal{I}$ (row-wise
index). Denoting the search direction at $\mathbf{x}$ by $\mathbf{p}
= \mathbf{Z} \mathbf{v}$, for all index $i \in \mathcal{I}$ so that
$\mathbf{a}_i^T \mathbf{p} \geqslant 0$, any positive move along
$\mathbf{p}$ will not violate the corresponding constraint
($\mathbf{a}_i$ denotes the $i$-th row of $\mathbf{A}$). Therefore,
constraints with non-negative $\mathbf{a}_i^T\mathbf{p}$ do not pose
any restriction on the stepsize. However, there is a critical step
length, $\gamma_i$, for indices with $\mathbf{a}_i^T \mathbf{p} <
0$, where the constraint becomes binding, i.e.,
$\mathbf{a}_i^T(\mathbf{x}+\gamma_i\mathbf{p})= \mathbf{b}_i$. So,
the upper bound on the stepsize due to feasibility of iterates is
computed by the following relation%
\begin{equation}%
\label{eq:step_inactive_c}%
    \bar{\alpha} = \min \big\{+\infty, \
    \gamma_i =
(\mathbf{b}_i - \mathbf{a}_i^T\mathbf{x})/(\mathbf{a}_i^T
\mathbf{p}) \ | \ \mathbf{a}_i^T \mathbf{p} < 0
    \big\}
\end{equation}
Similarly, the constrained optimization problem \eqref{eq:liq} can
be converted to the following unconstrained optimization problem on
the reduced space spanned by the active constraints%
\begin{equation}%
\label{eq:reduced_liq}%
\nonumber %
\tag{RLIP}
    \min_{\mathbf{v} \in \mathcal{N}(\hat{\mathbf{A}})}
    f(\mathbf{x}_0 + \alpha \mathbf{Z} \mathbf{v}), \quad \alpha \in [0, \bar{\alpha}]
\end{equation}%
By some simple linear algebra, it is easy to show that
\[
\mathtt{cond}\big(\mathbf{Z}^T\nabla^2_\mathbf{x}
f(\mathbf{x})\mathbf{Z}\big) \leqslant
\mathtt{cond}\big(\nabla^2_\mathbf{x}f(\mathbf{x})\big) \
\mathtt{cond}\big(\mathbf{Z}\big)^2%
\]%
therefore, to cope the possible instability during the computations
(consider the limited precision arithmetic of computers), it is
preferable to use an orthogonal null-space basis;
$\mathtt{cond}\big(\mathbf{Z}\big)=1$. The $QR$ factorization
\cite{golub1996matrix} is a common way to compute an orthogonal
null-space for a desired full-rank matrix.

\subsection{QR factorization}%
\label{sec:qr_factor}%

The $QR$ factorization for the transpose of full-rank matrix
$\mathbf{A} \in \mathbb{R}^{m\times n}$ is expressed in the
following form%
\[
\mathbf{A}^T = \mathbf{Q} \mathbf{R} = %
\left(%
\begin{array}{cc}
\mathbf{Q_1} & \mathbf{Q_2}
\end{array}%
\right)%
\left(%
\begin{array}{c}
\mathbf{R_1} \\
\mathbf{0}
\end{array}%
\right)%
\]
where $\mathbf{Q}$ is an orthogonal matrix, $\mathbf{Q_1} \in
\mathbb{R}^{n\times m}$, $\mathbf{Q_2} \in \mathbb{R}^{n\times
(n-m)}$, $\mathbf{R_1} \in \mathbb{R}^{m\times m}$. Since
$\mathbf{Q}$ is orthogonal, it follows that $\mathbf{A}\mathbf{Q}=
\mathbf{R}^T$, or $\mathbf{A}\mathbf{Q_1}= \mathbf{R_1}^T$ and
$\mathbf{A}\mathbf{Q_2}= \mathbf{0}$ which results $\mathbf{Z}=
\mathbf{Q_2}$.

In the present study, the Householder $QR$ algorithm (cf.
\cite{golub1996matrix}) is used to compute the $QR$ factorization of
$\mathbf{A}$. In this method the orthogonal matrix $\mathbf{Q}$ is
represented by the product of Householder reflection matrixes%
\[
\mathbf{Q} = \mathbf{H_1}\mathbf{H_2} \ldots \mathbf{H_m}
\]
where every $\mathbf{H_i} \in \mathbb{R}^{n\times n}$ is an
orthogonal matrix with the following generic form%
\[
\mathbf{H} = \mathbf{I} - \tau \mathbf{u}\mathbf{u}^T
\]
where $\mathbf{I} \in \mathbb{R}^{n\times n}$ is the identity
matrix, $\tau \in \mathbb{R}$ and $\mathbf{u} \in \mathbb{R}^n$. For
details of computing $\mathbf{H_i}$ factors refer to
\cite{golub1996matrix}. In the following we adapt the Householder
$QR$ factorization algorithm to compute an orthogonal null-space for
linear constraint in knapsack problems.

In the knapsack problems, we have at most one active linear
constraint, so we need to compute the $QR$ factorization for
$\mathbf{A} = \mathbf{a}$. Without loss of generality, assume that
$a_1 \neq 0$ (else a simple pivoting should be applied). In this
case the factor $\mathbf{Q}$ can be represented
in the following form%
\[
\mathbf{Q} = \mathbf{I} - \tau \mathbf{u}\mathbf{u}^T
\]
where %
\[
u_1 = 1, \quad u_i = \frac{a_i}{a_1-\zeta}, \quad i=2, \ldots, n,
\quad \tau = \frac{\zeta-a_1}{\zeta}, \quad %
\zeta = - \mathtt{sign}(a_1)\ \|\mathbf{a}\|_2
\]
moreover, the matrix $\mathbf{R}=\mathbf{R_1}$ is a $1\times 1$
matrix such that its singleton entry is equal to $\zeta$.


In the null-space method we frequently need to do product of
null-space matrix, $\mathbf{Z}$, or its inverse, $\mathbf{Z}^T$
(consider orthogonality of $\mathbf{Z}$), with a desired vector.
Notice that matrix $\mathbf{Z}$ is formed by removing the first
column of matrix $\mathbf{Q}$. Exploiting the specific
representation of $\mathbf{Q}$ it is possible to perform the product
of $\mathbf{Z}$ with an arbitrary vector $\mathbf{v} \in
\mathbb{R}^{n-1}$ in O(n) arithmetic operations. Considering a
pseudo entry $v_0=0$ for vector $\mathbf{v}$, simple linear algebra
results%
\[
\mathbf{zv} = \mathbf{Z}\mathbf{v}, \quad%
 {zv}_i = v_{i-1} - \tau
u_i \sum_{j=1}^{n-1}(u_{j+1}v_j),%
\quad i=1, \ldots, n.
\]
where $\mathbf{zv} \in \mathbb{R}^{n}$ is expansion of the reduced
vector $\mathbf{v} \in \mathbb{R}^{n-1}$ to the full-space.

%
In the similar way, product of $\mathbf{Z}^T$ with an arbitrary
vector $\mathbf{w} \in \mathbb{R}^n$ can be computed by the
following relation%
\[
\mathbf{ztw} = \mathbf{Z}^T\mathbf{w}, \quad%
 {ztw}_i = w_{i+1} - \tau
u_{i+1} \sum_{j=1}^{n}(u_i w_i),%
\quad i=1, \ldots, n-1.
\]
%
%

\subsection{An alternative null-space by orthogonal projection matrix}%
\label{sec:orthoproj_ns}%

The orthogonal projection matrix is another possible choice for the
null-space of full rank matrix $\mathbf{A} \in \mathbb{R}^{m\times
n}$. The orthogonal projection matrix, $\mathbf{P} \in
\mathbb{R}^{m\times n}$, can be computed by the
following relation%
\[
\mathbf{P} = \mathbf{I} - \mathbf{A}^T(\mathbf{A}\mathbf{A}^T)^{-1}
\mathbf{A}
\]
The main difference of this null-space basis with the previous
mentioned one is that the orthogonal projection matrix is not full
rank. Therefore, the equivalent unconstrained optimization problem
will be solved on the full-space $\mathbb{R}^n$. This may simplifies
the implementation of method; but probably; in expense of more
computational cost.

Notice that the name "orthogonal projection matrix" should not make
misleading about the properties of this null-space basis, as in
general $\mathbf{P}$ is not an orthogonal matrix. Therefore, the
stability of computation can be in question in using $\mathbf{P}$ as
the null-space matrix.

Without regard to the stability of computation, the special
structure of $\mathbf{A}$ in the knapsack problems, makes the
orthogonal projection matrix an attractive choice to make a
null-space basis. In this case $\mathbf{P}$ can be computed and
stored explicitly; without any cost; as follows%
\[
\mathbf{P} = \mathbf{I} - {(\mathbf{a}^T \mathbf{a})}^{-1} \
\mathbf{a} \mathbf{a}^T
\]
The product of $\mathbf{P}$ with an arbitrary vector $\mathbf{v} \in
\mathbb{R}^n$ can be computed within O(n) operations;
as follows%
\[
\mathbf{pv} = \mathbf{P} \mathbf{v}, %
\quad {pv}_i = v_i - a_i \ \frac{\sum_{j=1}^{n} (a_i v_i)}
{\sum_{j=1}^{n} (a_i a_i)},%
\quad i=1, \ldots, n
\]
where $\mathbf{pv} \in \mathbb{R}^n$ is the projection of vector
$\mathbf{v}$ onto the null-space of $\mathbf{A}$. Using theorem
\ref{th:proj_e}, it is trivial to show that the orthogonal
projection of an arbitrary point $\mathbf{y} \in \mathbb{R}^n$ onto
equality constraint of knapsack problem (ignoring bound constraints)
which is
denoted by $\mathbf{z} \in \mathbb{R}^n$ can be computed by%
\begin{equation}%
\label{eq:proj_knapsack_eo}%
z_i = y_i - a_i \ \frac{b-\sum_{j=1}^n (a_j y_j)}{\sum_{j=1}^n
a_j^2}, \quad i=1, \ldots, n%
\end{equation}%
In the similar way, using theorem \ref{th:proj_ie}, projection point
onto knapsack bilateral inequality constraint (ignoring bound
constraints) is computed by the following relation%
\begin{equation}%
\label{eq:proj_knapsack_io}%
\mathbf{z} = \mathtt{mid}(\mathbf{zl},\  \mathbf{y},\ \mathbf{zu})%
\end{equation}%
where $\mathbf{zl}$ and $\mathbf{zu}$ are computed by
\eqref{eq:proj_knapsack_eo} in which $b$ is replaced by $b_l$ and
$b_u$ respectively. Equations \eqref{eq:proj_knapsack_eo} and
\eqref{eq:proj_knapsack_io} are useful for treatment of linear
constraints by projection in the unconstrained solver.

\subsection{Unconstrained optimization on the reduced space}%
\label{sec:exploiting_reduced_solver}%

In this subsection we want to comment on the exploiting an
unconstrained optimization solver on the null-space of (active)
linear constraints. Basically this procedure is straightforward and
the unconstrained optimization solver can be blind about the
constrained problem, i.e., we can use the unconstrained optimization
solver as a black-box. The following items are sufficient to exploit
the
unconstrained solver for this purpose:%
\begin{itemize}%
\item[(a)]%
At the initial step we need the starting point $\mathbf{x}_0 \in
\mathbb{R}^n$ which is feasible with respect to linear constraints.
It can be achieved by projecting a trail point onto the constraint
set. This point is stored in memory and used during the
unconstrained optimization as described below. The starting point of
unconstrained optimization solver, $\mathbf{v}_0 \in
\mathbb{R}^{n-m}$, can be taken equal to $\mathbf{0} \in
\mathbb{R}^{n-m}$.

\item[(b)]%
When the unconstrained solver asks for the value of the objective
function at the trial point $\mathbf{v}_k \in \mathbb{R}^{n-m}$, we
should first compute the corresponding value of $\mathbf{v}_k$ on
the full-space, $\mathbf{x}_k \in \mathbb{R}^{n}$, by this relation
$\mathbf{x}_k = \mathbf{x}_0 + \mathbf{Z}\mathbf{v}_k$. Then
$f(\mathbf{x}_k)$ is passed to the unconstrained solver as the value
of the objective function at $\mathbf{v}_k$.

\item[(c)]%
Similar to item (b), when the unconstrained solver asks for the
gradient of the objective function,
$\nabla_{\mathbf{v}}f(\mathbf{v}_k)$, at the trial point
$\mathbf{v}_k$, we first compute $\mathbf{x}_k = \mathbf{x}_0 +
\mathbf{Z}\mathbf{v}_k$. Then pass $\mathbf{Z}^T
\nabla_{\mathbf{x}}f(\mathbf{x}_k)$ to the unconstrained solver as
the desired gradient.

\item[(c)]%
Similarly, when the unconstrained solver asks for the Hessian matrix
of the objective function, $\nabla^2_{\mathbf{v}}f(\mathbf{v}_k)$,
at the trial point $\mathbf{v}_k$, the value of $\mathbf{Z}^T
\nabla^2_{\mathbf{x}}f(\mathbf{x}_0 +
\mathbf{Z}\mathbf{v}_k)\mathbf{Z}$ is passed to it as the desired
Hessian matrix.

\item[(d)]%
The stepsize related to the globalization strategy of the
unconstrained solver should be restricted to interval $[0,
\bar{\alpha}]$, where $\bar{\alpha}$ is computed by
\eqref{eq:step_inactive_c}. In the present study, this item only
applied in the case of inequality knapsack problems. If the active
set of linear constraints at the optimal solution be determined in a
finite number of iterations, this stepsize restriction does not
destroy the local convergence rate of the unconstrained solver. For
the finite-cycle determination of active set, it is required that
the new active set be a sub-set of the previous one; which is
possible to ensure in the case of no non-degeneracy of the optimal
solution with respect to the linear active constraints (the lagrange
multiplier corresponding to the active constraint be non-zero at the
local solution).

\end{itemize}%
%


\section{Hager-Zhang active set algorithm for knapsack problems}%
\label{sec:HZASA}%

Consider a bound constrained nonlinear program, if the set of active
constraints at a local solution be a-priori known, it is possible to
fix these constraints to the corresponding bound values and solve an
unconstrained optimization problem on the reduced space spanned by
the free variables. This is the key idea of the active set strategy.
In practice, the set of active indices in not a-priori known,
therefore they should be identified by an appropriate prediction
correction strategy. Under appropriate conditions, this procedure
can be performed in a finite number of iterations. However, in
general it is possible to add (remove) only one index to (from) the
current active set. This increases the necessary number of
iterations in particular for large scale problems. Fortunately, it
is possible to add many constraints to working set by means of the
projected gradient method. This idea was used in
\cite{lin1999newton,heinkenschloss1999superlinear,birgin2002large,hager2006new}
to efficiently exploit the active set strategy.

In this section we adapt the Hager-Zhang active set algorithm
(HZ-ASA) \cite{hager2006new} to solve knapsack problems. Using the
previously constructed tools, this extension is trivial and the
convergence theory holds almost under the same conditions. The main
reason for selection of this algorithm is its excellent convergence
theories in addition to the promising numerical results reported in
\cite{hager2006new}. Moreover, unlike
\cite{lin1999newton,heinkenschloss1999superlinear}, this method
admits the superlinear convergence under the same conditions while
it does not need any explict form of second order information and/or
solution of system of linear equations per cycle. These properties
makes it an ideal choice to solve very large scale problems. The
current author believes that the key efficiency of this method can
be connected to its two-phase nature. Unlike the related methods
which use the same interactions in the course of optimization, this
method start with a cheap constrained first-order method and after
sufficient progress toward a local solution, branches to a (more
expensive) higher-order unconstrained solver. In other words, it
does not waste the energy at early stages by performing expensive
and accurate steps. Under certain conditions, the method only
branches once between two phases. This strategy is particularly
effective when the initial guess is poor. Note that the basic idea
for this strategy is not new and already used in
\cite{birgin2002large}.

In \cite{hager2006new} the nonmonotone spectral projected gradient
\cite{birgin2000nonmonotone} (SPG) was used to identify the working
set. In this method, the search direction is parallel to the
steepest descent direction (is descent) and the stepsize is computed
by projecting the (spectral) Barzilai-Borwein \cite{barzilai1988two}
step onto the feasible set. Moreover, the Grippo-Lampariello-Lucidi
nonmonotone line search algorithm \cite{grippo1986nonmonotone} is
used to ensure the convergence to a local minimum from an arbitrary
initial guess, simultaneously benefiting from the spectral property
of the Barzilai-Borwein stepsize as much as possible. The SPG method
can be efficiently applied to every convex constrained optimization
problem (under some common conditions) providing an efficient method
to project a trial point onto the feasible set. Therefore, by means
of O(n) projection algorithm introduced in section
\ref{sec:projection}, SPG can be effectively used to solve large
scale knapsack problems. However, the local convergence of SPG
method will not be better than linear in the vicinity of the local
solution.

Let $\alpha \in \mathbb{R}$, the scaled projected gradient,
$\mathbf{d}^\alpha(\mathbf{x})$ is defined as follows%
\[
\mathbf{d}^\alpha(\mathbf{x}) = \mathcal{P}_\mathcal{D}(\mathbf{x} -
\alpha \nabla_\mathbf{x} f(\mathbf{x})^T) - \mathbf{x}
\]
It is easy to show that $\mathbf{d}^\alpha(\mathbf{x})$ is a
constrained descent direction (e.g. see: lemma 2.1 of
\cite{birgin2000nonmonotone} or proposition 2.1 of
\cite{hager2005new}). Therefore, it can be used as the stopping
criteria to measure the distance of current iterate from the optimal
solution. Assume $\mathbf{s}_k = (\mathbf{x}_k - \mathbf{x}_{k-1})$
and $\mathbf{y}_k = (\nabla_\mathbf{x} f(\mathbf{x}_k) -
\nabla_\mathbf{x} f(\mathbf{x}_{k-1}))$. In the Barzilai-Borwein
method, the trial stepsize, $\alpha_k$, is computed from a
one-dimensional search method
based on the following relation%
\begin{equation}%
\label{eq:bb stepsize_t}%
    \alpha_{k} = \arg\min_{ \alpha \in {\mathbb R}} \ \frac 12 \ \|
    {\mathbf{D}}(\alpha) \ {\mathbf{s}}_{k-1} - {\mathbf{y}}_{k-1}\|^2 = %
    ({\mathbf{s}}_{k-1}^T {\mathbf{s}}_{k-1})/( {\mathbf{s}}_{k-1}^T
    {\mathbf{y}}_{k-1}).
\end{equation}%
where ${\mathbf{D}}(\alpha) = \frac{1}{\alpha}{\mathbf{I}}$ is a
diagonal approximation to the Hessian matrix $\nabla^2_\mathbf{x}
f(\mathbf{x})$. The spectral property of Barzilai-Borwein method can
be inferred
from the following lemma:%
\begin{lemma}%
\label{lem:spectral}%
The Barzilai-Borwein stepsize, is equal to inverse of Rayleigh
quotient, related to vector $s_{k-1}$, of the averaged Hessian of
the objective functional between two consecutive iterations $k-1$
and $k$.
\end{lemma}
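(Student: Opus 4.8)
The plan is to rewrite the secant vector $\mathbf{y}_{k-1}$ as the action of an averaged Hessian on the step $\mathbf{s}_{k-1}$, via the fundamental theorem of calculus, and then to read the claim directly off the closed form of the Barzilai--Borwein stepsize in \eqref{eq:bb stepsize_t}.

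First I would assume $f$ is twice continuously differentiable on an open set containing the segment joining $\mathbf{x}_{k-2}$ and $\mathbf{x}_{k-1}$ (a mild strengthening of the standing $C^1$ hypothesis) and introduce the averaged Hessian along that segment,
\[
\bar{\mathbf{H}}_{k-1} \stackrel{def}{=} \int_0^1 \nabla^2_\mathbf{x} f\big(\mathbf{x}_{k-2} + t\,\mathbf{s}_{k-1}\big)\, dt ,
\]
recalling that $\mathbf{s}_{k-1} = \mathbf{x}_{k-1} - \mathbf{x}_{k-2}$. Applying the fundamental theorem of calculus to the $\mathbb{R}^n$-valued map $t \mapsto \nabla_\mathbf{x} f(\mathbf{x}_{k-2} + t\,\mathbf{s}_{k-1})$ on $[0,1]$, together with the chain rule, yields the exact secant relation $\mathbf{y}_{k-1} = \bar{\mathbf{H}}_{k-1}\,\mathbf{s}_{k-1}$; moreover $\bar{\mathbf{H}}_{k-1}$ is symmetric because every Hessian in the integrand is.

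Then I would substitute $\mathbf{y}_{k-1} = \bar{\mathbf{H}}_{k-1}\mathbf{s}_{k-1}$ into \eqref{eq:bb stepsize_t}:
\[
\alpha_{k} = \frac{\mathbf{s}_{k-1}^T \mathbf{s}_{k-1}}{\mathbf{s}_{k-1}^T \mathbf{y}_{k-1}} = \frac{\mathbf{s}_{k-1}^T \mathbf{s}_{k-1}}{\mathbf{s}_{k-1}^T \bar{\mathbf{H}}_{k-1}\,\mathbf{s}_{k-1}} = \left( \frac{\mathbf{s}_{k-1}^T \bar{\mathbf{H}}_{k-1}\,\mathbf{s}_{k-1}}{\mathbf{s}_{k-1}^T \mathbf{s}_{k-1}} \right)^{-1} ,
\]
and observe that the parenthesized expression is precisely the Rayleigh quotient of the averaged Hessian $\bar{\mathbf{H}}_{k-1}$ associated with the vector $\mathbf{s}_{k-1}$, which is exactly the assertion of the lemma.

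Since every step above is an equality, there is no genuine obstacle in the argument; the only point deserving care is the regularity used to define $\bar{\mathbf{H}}_{k-1}$. If one is unwilling to assume $f\in C^2$ even locally, the statement should be understood with ``averaged Hessian'' read in a limiting/weak sense, or one simply retains the identity $\mathbf{s}_{k-1}^T\mathbf{y}_{k-1} = \mathbf{s}_{k-1}^T\bar{\mathbf{H}}_{k-1}\mathbf{s}_{k-1}$, which is all the stepsize formula actually uses. It is also worth recording that $\alpha_k$ is finite and positive exactly when $\mathbf{s}_{k-1}^T\bar{\mathbf{H}}_{k-1}\mathbf{s}_{k-1} > 0$, i.e. when the averaged curvature along the step is positive, which ties the spectral interpretation to the curvature (secant) condition used elsewhere in the method.
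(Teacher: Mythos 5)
Your argument is correct and is essentially the paper's own proof: both rewrite the secant vector as the averaged Hessian acting on the step via the fundamental theorem of calculus (the paper invokes the mean-value theorem for the same identity) and then identify the denominator of the Barzilai--Borwein formula as a Rayleigh quotient. If anything, your version is slightly more careful, since it keeps the indices consistent with the paper's definitions of $\mathbf{s}_{k-1}$ and $\mathbf{y}_{k-1}$ and states explicitly the $C^2$ regularity needed to define the averaged Hessian.
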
%
\begin{proof}%
Using the Mean-Value Theorem it is easy to show that:%
\[
   (\nabla_\mathbf{x} f(\mathbf{x}_k) - \nabla_\mathbf{x} f(\mathbf{x}_{k-1}))/(\textbf{x}_k -
   \textbf{x}_{k-1})= %
   \int_0^1 \nabla^2 f (t \textbf{x}_k + [1-t] \textbf{x}_{k-1}) dt, %
\]
therefore,
\[
   (\textbf{s}_{k-1}^T\textbf{y}_{k-1})/(\textbf{s}_{k-1}^T\textbf{s}_{k-1}) =
   \bigg(%
   \textbf{s}_{k-1}^T%
   \big(%
   \int_0^1 \nabla^2 f (\textbf{x}_{k-1} + t \textbf{s}_{k-1} ) dt
   \big)%
   \textbf{s}_{k-1}%
   \bigg)/(\textbf{s}_{k-1}^T\textbf{s}_{k-1}),%
\]%
which complete the proof.
\end{proof}%
By lemma \ref{lem:spectral}, $\Lambda_{max}^{-1} \leqslant \alpha_k
\leqslant \Lambda_{min}^{-1}$, where $\Lambda_{max}$ and
$\Lambda_{min}$ are the maximum and minimum eigenvalues of the
averaged Hessian matrix respectively. Therefore, $\alpha_k
\textbf{I}$ is a consistent approximation to the inverse of the
Hessian matrix. The statement of SPG algorithm is as follows:\\

\begin{center}%
\includegraphics[width=13.cm]{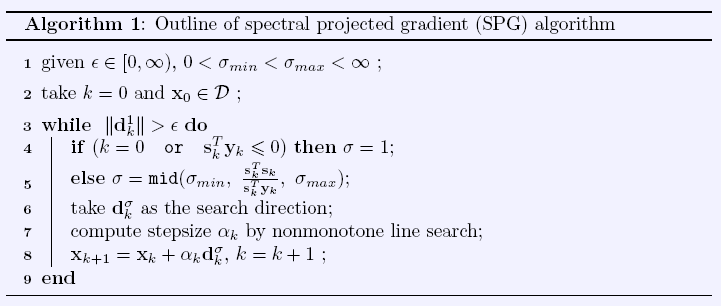}%
\end{center}%
\vspace*{10mm} %
Note that the condition $\mathbf{s}_k^T \mathbf{y}_k \leqslant 0$ in
line 4 of algorithm 1 means the detection of negative curvature
directions. Assuming that the objective function is consistent with
its quadratic approximation around the current iterate, the large
stepsize $\sigma=1$ is used to achieve the maximum reduction. The
statement of the nonmonotone line
search used in this study is as follows:\\

\begin{center}%
\includegraphics[width=13.cm]{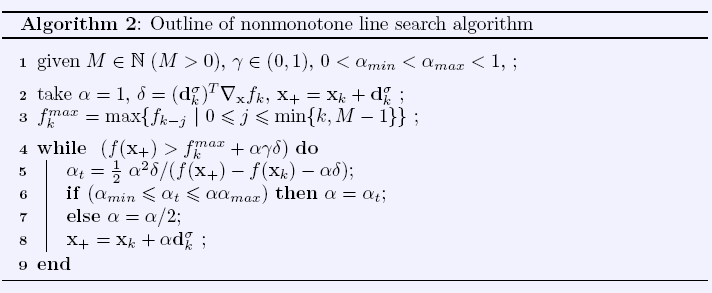}%
\end{center}%
\vspace*{10mm} %
Notice that when the decrease condition in line 4 of algorithm 2 is
not satisfied a quadratic interpolation (line 5) is used to compute
the trail step $\alpha_t$ and it is accepted if lies within interval
$[\alpha_{min}, \alpha\alpha_{max}]$, otherwise a bisection will be
performed (line 7). The quadratic function $q(\alpha)$ is formed
such that $q(0)= f(\mathbf{x}_k)$, $q(\alpha)=f(\mathbf{x}_+)$ and
$dq/d\alpha = \delta$.

\begin{theorem} (\cite{birgin2000nonmonotone} theorem 2.4) %
Algorithm SPG is well-defined, and any accumulation point of the
sequence $\{\mathbf{x}_k\}$ that it generates is a constrained
stationary point.%
\end{theorem}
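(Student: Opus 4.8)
The statement reproduces Theorem~2.4 of \cite{birgin2000nonmonotone}, and the plan is to carry its nonmonotone line-search analysis over to the present notation; the only facts about $\mathcal{D}$ that enter are its convexity (used for the descent property and for feasibility of the iterates) and its compactness (used so that the function values stay bounded), both of which hold for the knapsack feasible sets considered here.

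First I would establish well-definedness, i.e.\ that the backtracking loop in Algorithm~2 terminates. Because $\mathcal{D}$ is convex, the obtuse-angle characterization of the Euclidean projection gives, for every $\alpha>0$,
\[
\nabla_\mathbf{x} f(\mathbf{x}_k)^T\,\mathbf{d}^\alpha(\mathbf{x}_k)\ \leqslant\ -\tfrac{1}{\alpha}\,\|\mathbf{d}^\alpha(\mathbf{x}_k)\|_2^2\ \leqslant\ 0,
\]
with strict inequality unless $\mathbf{d}^\alpha(\mathbf{x}_k)=0$, in which case $\mathbf{x}_k$ is already a constrained stationary point and there is nothing to prove. Hence $\mathbf{d}_k:=\mathbf{d}^{\alpha_k}(\mathbf{x}_k)$ is a strict descent direction, and the safeguard $\alpha_k\in[\alpha_{\min},\alpha_{\max}]$ keeps $\nabla_\mathbf{x} f(\mathbf{x}_k)^T\mathbf{d}_k\leqslant-\alpha_{\max}^{-1}\|\mathbf{d}_k\|_2^2$. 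Since the reference value $f_{\max}$ used in the sufficient-decrease test is the maximum of $f$ over the current nonmonotone window of iterates and therefore at least $f(\mathbf{x}_k)$, and $f$ is continuously differentiable, the Armijo-type inequality $f(\mathbf{x}_k+\alpha\mathbf{d}_k)\leqslant f_{\max}+\gamma\,\alpha\,\nabla_\mathbf{x} f(\mathbf{x}_k)^T\mathbf{d}_k$ holds for all sufficiently small $\alpha>0$, so the loop exits after finitely many reductions; moreover $\mathbf{x}_{k+1}=(1-\alpha_k')\mathbf{x}_k+\alpha_k'(\mathbf{x}_k+\mathbf{d}_k)$ with $\alpha_k'\in(0,1]$ remains in $\mathcal{D}$, so all iterates are feasible.

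Next I would run the Grippo--Lampariello--Lucidi bookkeeping. Letting $V_k$ denote that running maximum of $f$, the acceptance test forces $\{V_k\}$ to be nonincreasing, and since the iterates stay in the compact set $\mathcal{D}$ and $f$ is continuous, $\{V_k\}$ is bounded below and hence convergent. Chaining the acceptance inequality along the indices at which this running maximum is attained and using the descent bound above yields $\alpha_k'\,\|\mathbf{d}_k\|_2\to0$ for the accepted stepsizes $\alpha_k'$, so $\|\mathbf{x}_{k+1}-\mathbf{x}_k\|_2\to0$; the finite-window induction of \cite{grippo1986nonmonotone} is what propagates this from the distinguished subsequence to every index. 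Then, given an accumulation point $\mathbf{x}^*$ with $\mathbf{x}_{k_j}\to\mathbf{x}^*$, I pass to a further subsequence so that $\alpha_{k_j}\to\alpha^*\in[\alpha_{\min},\alpha_{\max}]$ and $\mathbf{d}_{k_j}\to\mathbf{d}^*$ (the latter exists because $\{\mathbf{d}_k\}$ is bounded, $\mathcal{D}$ being bounded). A dichotomy on whether $\inf_j\alpha_{k_j}'>0$ or $\alpha_{k_j}'\to0$ --- in the second case applying the mean-value theorem to the last rejected trial step together with continuity of $\nabla_\mathbf{x} f$ --- gives $\nabla_\mathbf{x} f(\mathbf{x}^*)^T\mathbf{d}^*\geqslant0$, which combined with $\nabla_\mathbf{x} f(\mathbf{x}^*)^T\mathbf{d}^*\leqslant-(\alpha^*)^{-1}\|\mathbf{d}^*\|_2^2$ forces $\mathbf{d}^*=0$; by continuity of $\mathbf{x}\mapsto\mathbf{d}^\alpha(\mathbf{x})$ for fixed $\alpha$ this means $\mathbf{d}^{\alpha^*}(\mathbf{x}^*)=0$, i.e.\ $\mathbf{x}^*=\mathcal{P}_\mathcal{D}\big(\mathbf{x}^*-\alpha^*\nabla_\mathbf{x} f(\mathbf{x}^*)\big)$, which is exactly the condition for $\mathbf{x}^*$ to be a constrained stationary point.

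I expect the main obstacle to be the Grippo--Lampariello--Lucidi propagation together with the final dichotomy: transferring ``vanishing steps along the maximizing subsequence'' to ``vanishing steps everywhere'' requires the careful finite-window induction, and ruling out the degenerate case $\alpha_k'\to0$ rests on the mean-value argument applied to the rejected trial step plus uniform continuity of $f$ and $\nabla_\mathbf{x} f$ over the compact set $\mathcal{D}$. By contrast, the descent estimate, the feasibility of the iterates, and the limit-passing at $\mathbf{x}^*$ are routine consequences of convexity of $\mathcal{D}$ and $C^1$ smoothness of $f$.
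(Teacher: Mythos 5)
Your proposal is correct and follows essentially the same route as the source on which the paper relies: the paper states this result without proof, simply citing Theorem~2.4 of \cite{birgin2000nonmonotone}, and your sketch faithfully reconstructs that reference's argument (descent via the obtuse-angle property of the projection onto the convex set, the Grippo--Lampariello--Lucidi nonmonotone bookkeeping, and the stepsize dichotomy with the mean-value argument at an accumulation point). The only deviation is your added compactness assumption on $\mathcal{D}$, which the original theorem does not require but which holds harmlessly here because the knapsack feasible sets are contained in the bounded box $\mathcal{B}$.
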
%

Further details about SPG algorithm and its convergence theory can
be found in either of \cite{birgin2000nonmonotone} or section 2 of 
\cite{hager2006new}.

The unconstrained solver (US) used in HZ-ASA is the CGDESCENT
algorithm \cite{hager2005new} which possesses a superlinear local
convergence rate under some common conditions. In HZ-ASA the
unconstrained solver is modified to be enable to manage infeasible
iterations (with respect to bound constraints). Similar to
\eqref{eq:step_inactive_c}, it includes the modification of the
stepsize such that the new iterates will not be infeasible; i.e.,
the trial stepsize is limited to the interval $[0, \hat{\alpha}]$
where,
\begin{equation}%
\label{eq:step_inactive_c_box}%
\hat{\alpha} = \max\{%
\alpha \in \mathbb{R}^+ \ | \ (\mathbf{x}_I(\mathbf{x}) + \alpha
\mathbf{p}_I(\mathbf{x})) \in \mathcal{B}%
\}%
\end{equation}
The outline of the reduced CGDESCENT  (RCGD)
algorithm used in this study is as follows:\\

\begin{center}%
\includegraphics[width=13.cm]{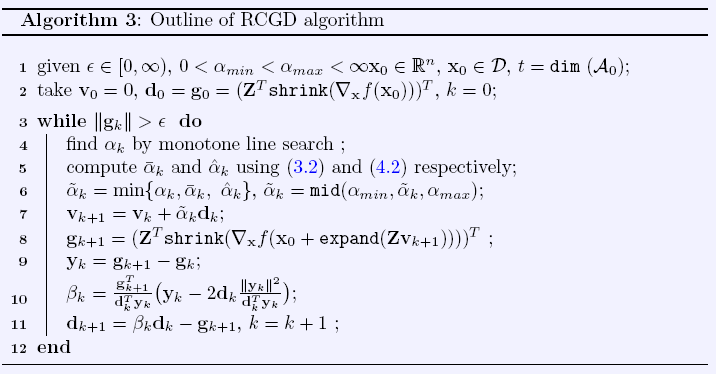}%
\end{center}%
\vspace*{10mm} %
Notice that the value of active constraints will be fixed during
RCGD iterations. The feasibility of free indices with respect to
bound constraints will be maintained by $\hat{\alpha}_k$. When
$\mathcal{D}\stackrel{def}{=}\mathcal{D}_{\mathcal{E}}$ then we
 have  always one active linear constraint and work with $\mathbf{d}$,
$\mathbf{g}$, $\mathbf{v}$, $\mathbf{y} \in \mathbb{R}^{n-t-1}$ and
$\bar{\alpha}_k = \infty$. When
$\mathcal{D}\stackrel{def}{=}\mathcal{D}_{\mathcal{I}}$ and both of
the linear inequality constraints be inactive $\mathbf{d}$,
$\mathbf{g}$, $\mathbf{v}$, $\mathbf{y} \in \mathbb{R}^{n-t}$ and
$\mathbf{Z}=\mathbf{I}$. In this case the feasibility of iterations
with respect to equality constraints will be controlled by
$\bar{\alpha}_k$. Finally, when
$\mathcal{D}\stackrel{def}{=}\mathcal{D}_{\mathcal{I}}$ and one of
the linear inequality constraints be active we will work with
$\mathbf{d}$, $\mathbf{g}$, $\mathbf{v}$, $\mathbf{y} \in
\mathbb{R}^{n-t-1}$ and the feasibility of iterations with respect
to linear constraints will be controlled by $\bar{\alpha}_k$.

The monotone line-search used in RCGD algorithm should satisfy
either of the standard Wolfe conditions or approximate Wolfe
conditions introduced in \cite{hager2005new}. In \cite{hager2005new}
a robust line search algorithm is suggested which tolerates the
finite accuracy precision of computer arithmetic very well. The
global convergence and superlinearly local convergence of the
presented RCGD algorithm can be directly followed from the
convergence theories of the original CGDESCENT algorithm discussed
in \cite{hager2005new}.

Following \cite{hager2006new}, the set of undecided indices is
defined as follows%
\[
\mathcal{U}(\mathbf{x}) = \{ i \in [1,n] \ | \ %
g_i(\mathbf{x}) \geqslant \|\mathbf{d}^1(\mathbf{x})\|^a \ %
\mathtt{and} \ %
\min\{x_i-l_i, u_i-x_i\} \geqslant \|\mathbf{d}^1(\mathbf{x})\|^b \}
\]
where $g_i(\mathbf{x})$ denotes $i$-th component of
$\nabla_\mathbf{x} f(\mathbf{x})$, $a \in (0,1)$ and $b \in (1,2)$
(e.g. $a=1/2$ and $b=3/2$). In fact, $\mathcal{U}(\mathbf{x})$
denotes the set of indices correspond to components of $\mathbf{x}$
for which the associated gradient component, $g_i(\mathbf{x})$, is
relatively large while $x_i$ is not close to either of $l_i$ or
$u_i$. When $\mathcal{U}(\mathbf{x})$ is empty, it implies that the
indices with large associated gradient component are almost
identified and it is preferable to perform unconstrained
optimization algorithm on the reduced space of free indices. This
naturally happens in the vicinity of the local solution.

Now let us to recall the HZ-ASA \cite{hager2006new} here. Using the
above mentioned SPG and  RCGD algorithm it is straightforward to use
HZ-ASA algorithm to solve knapsack problems. The outline of HZ-ASA
for knapsack problems is as follows:

\begin{center}%
\includegraphics[width=13.cm]{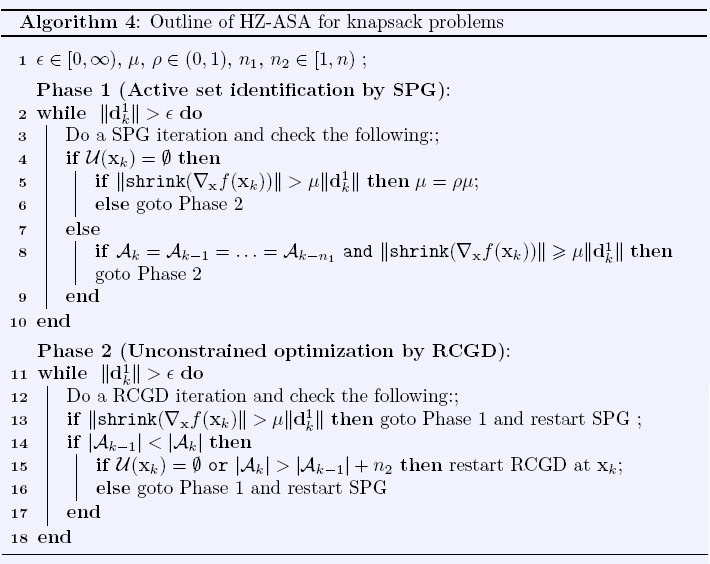}%
\end{center}%
\vspace*{10mm} %

The convergence theories stated in \cite{hager2006new} for HZ-ASA is
almost hold for algorithm 4. In the remaining part of this section,
we shall re-state results of \cite{hager2006new} for algorithm 4.

\begin{theorem} (global convergence)  %
\label{th:khzasa_gc}%
Let $\mathcal{L}$ be the level set defined by%
\begin{center}%
$\mathcal{L} = \{ {\bf x} \in \mathcal{D} : f ({\bf x}) \le f({\bf
x}_0)$ \}.%
\end{center}%
We assume the following conditions hold:%
\begin{itemize}%
    \item[{\rm G1.}]%
        $f$ is bounded from below in $\mathcal{L}$ and $d_{\max} =
        {\sup}_k \|{\bf d}_k\| < \infty$.%
    \item[{\rm G2.}]%
        If $\bar{\mathcal{L}}$ is the collection of ${\bf x}\in \mathcal{D}$
        whose distance to $\mathcal{L}$ is at most $d_{\max}$, then
        $\nabla f$ is Lipschitz continuous on $\bar{\mathcal{L}}$.%
\end{itemize}%
Then either algorithm 4 with $\epsilon=0$ terminates in a finite
number of iterations at a stationary point, or we have
$\liminf\limits_{k \to\infty} \| \textbf{d}^1(\textbf{x}_k)
\|_\infty = 0$.
\end{theorem}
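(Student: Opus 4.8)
The plan is to reduce the global convergence of algorithm 4 to the global convergence of the SPG phase (algorithm 1), which is already guaranteed by the cited theorem of \cite{birgin2000nonmonotone}. The key structural observation is that algorithm 4 is a two-phase scheme that alternates between SPG steps and RCGD steps, and the branching from SPG to RCGD happens only when the undecided index set $\mathcal{U}(\mathbf{x}_k)$ becomes empty while the branching back from RCGD to SPG is triggered by a failure of sufficient decrease in $\|\mathbf{d}^1\|$ (or reactivation of a bound). So the proof naturally splits into cases according to how many times the algorithm switches phases.

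First I would dispose of the finite-termination case: if $\epsilon=0$ and the algorithm halts, it halts only when $\|\mathbf{d}^1(\mathbf{x}_k)\|_\infty=0$, which by the descent property of the scaled projected gradient (the lemmas cited from \cite{birgin2000nonmonotone,hager2005new}) means $\mathbf{x}_k$ is a constrained stationary point; this is the first alternative in the conclusion. Then I would treat the infinite case. Suppose the SPG phase is entered infinitely often and an infinite number of SPG iterations are performed overall; then along that subsequence algorithm 1 is in effect, G1 and G2 supply exactly the hypotheses (bounded below on the level set, Lipschitz gradient on the enlarged set $\bar{\mathcal{L}}$, bounded directions) needed to invoke the SPG convergence theorem, and we get a subsequence with $\|\mathbf{d}^1(\mathbf{x}_k)\|\to 0$, hence $\liminf_k \|\mathbf{d}^1(\mathbf{x}_k)\|_\infty=0$. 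The remaining case is that after some iteration $K$ the algorithm stays in the RCGD phase forever; here I would argue that RCGD is a monotone conjugate-gradient method with Wolfe (or approximate Wolfe) line search on the reduced space of a fixed set of free variables, the feasibility-truncated stepsizes $\hat{\alpha}_k$, $\bar{\alpha}_k$ do not interfere in the limit because the active set has stabilized, and therefore the global convergence theory of CGDESCENT \cite{hager2005new} applies to give $\liminf$ of the reduced gradient norm equal to zero; since on the fixed face the reduced gradient controls $\mathbf{d}^1$, we again conclude $\liminf_k\|\mathbf{d}^1(\mathbf{x}_k)\|_\infty=0$.

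The main obstacle is the bookkeeping in the mixed case where the algorithm switches between the two phases infinitely often but spends only finitely many iterations in one of them: one must check that the phase that is visited infinitely often still receives the hypotheses it needs (in particular that iterates remain in $\bar{\mathcal{L}}$, so that Lipschitz continuity of $\nabla f$ from G2 is available, and that $d_{\max}$ from G1 bounds the search directions uniformly across both phases). A careful statement of the inter-phase transition rules of algorithm 4 — specifically that each completed phase produces a non-increasing (or nonmonotone-but-bounded) objective value and a point still in $\mathcal{L}$, and that the switch condition itself forces $\|\mathbf{d}^1\|$ small at the moment of branching from RCGD — closes this gap. Once that is in hand, I would also note that the proof is essentially verbatim that of the corresponding theorem in \cite{hager2006new}, the only change being that the projection $\mathcal{P}_\mathcal{D}$ is now onto the knapsack set rather than a box, and that projection is well-defined and continuous by Theorems~\ref{th:proj_e} and~\ref{th:proj_ie}, so none of the convergence arguments that rely on properties of $\mathcal{P}_\mathcal{D}$ are affected.
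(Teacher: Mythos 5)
Your proposal is correct and follows essentially the same route as the paper: the paper's entire proof is the one-line citation ``followed from theorem 4.1 of \cite{hager2006new},'' and your closing observation---that the argument is verbatim Hager--Zhang's with the box projection replaced by the (well-defined, nonexpansive) knapsack projection of Theorems~\ref{th:proj_e} and~\ref{th:proj_ie}---is exactly that reduction. The case analysis you supply (finite termination, infinitely many SPG iterations, eventual pure RCGD, and the mixed-switching bookkeeping) is a faithful reconstruction of the structure of the cited proof rather than a different approach.
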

\begin{proof}%
followed from theorem 4.1 of \cite{hager2006new}.
\end{proof}%

\begin{theorem} (global minimizer)  %
\label{th:khzasa_gc_convex}%
If $f$ is strongly convex and twice continuously differentiable on
$\mathcal{D}$, then the iterates $\mathbf{x}_k$ of algorithm 4 with
$\epsilon = 0$ converge to the global
minimizer of \eqref{eq:P}.%
\end{theorem}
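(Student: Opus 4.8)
The plan is to deduce the result from the global convergence theorem, Theorem~\ref{th:khzasa_gc}, in exactly the way the strongly convex case is handled in \cite{hager2006new}, using the fact that the $O(n)$ projection of Theorems~\ref{th:proj_e} and~\ref{th:proj_ie} and the null-space parametrization~\eqref{eq:ns_parametrization} make algorithm~4 \emph{mathematically identical} to HZ-ASA applied to the closed convex set $\mathcal{D}$; the implementation devices do not alter the iterates, so the convergence analysis of \cite{hager2006new} transfers verbatim. Concretely I would (i) verify hypotheses G1--G2 of Theorem~\ref{th:khzasa_gc}; (ii) invoke that theorem to get the projected gradient vanishing along a subsequence; (iii) use convexity and strong convexity to identify the corresponding accumulation point with the unique global minimizer $\bar{\mathbf{x}}$ of~\eqref{eq:P}; and (iv) upgrade subsequential convergence to convergence of the whole sequence.

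For step~(i): strong convexity of $f$ forces the level set $\mathcal{L}$ to be bounded, and since $\mathcal{D}$ is closed (a box intersected with a hyperplane or with one or two half-spaces), $\mathcal{L}$ is compact; continuity of $f$ on $\mathcal{L}$ then gives the lower bound required in G1. The GLL nonmonotone reference value is nonincreasing and bounded above by $f(\mathbf{x}_0)$, so every iterate stays in $\mathcal{L}$. Hence in the SPG phase $\mathbf{d}_k=\mathcal{P}_{\mathcal{D}}(\mathbf{x}_k-\alpha_k\nabla_{\mathbf{x}} f(\mathbf{x}_k))-\mathbf{x}_k$ is a difference of points of the bounded set $\mathcal{D}$, and in the RCGD phase the gradients are bounded (continuity on the compact set $\mathcal{L}$), so the CG\_DESCENT directions are bounded by the direction bound of \cite{hager2005new}; together these give $d_{\max}=\sup_k\|\mathbf{d}_k\|<\infty$, the second part of G1. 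Finally $\bar{\mathcal{L}}$, being a bounded closed subset of $\mathbb{R}^n$, is compact, so $\nabla^2_{\mathbf{x}} f$ is bounded on it and $\nabla_{\mathbf{x}} f$ is Lipschitz continuous there, which is G2.

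Steps~(ii)--(iii) are then short. Theorem~\ref{th:khzasa_gc} with $\epsilon=0$ says algorithm~4 either stops finitely at a stationary point or satisfies $\liminf_k\|\mathbf{d}^1(\mathbf{x}_k)\|_\infty=0$; in either case there is a subsequence $\mathbf{x}_{k_j}\to\mathbf{x}^{\ast}$ with $\mathbf{d}^1(\mathbf{x}^{\ast})=0$, i.e.\ $\mathbf{x}^{\ast}$ is a constrained stationary point. Since $f$ and $\mathcal{D}$ are convex, every constrained stationary point is a global minimizer, and strong convexity makes it unique; denote it $\bar{\mathbf{x}}$, so $\mathbf{x}^{\ast}=\bar{\mathbf{x}}$ and $f(\mathbf{x}_{k_j})\to f(\bar{\mathbf{x}})$.

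The real work is step~(iv), and the delicate point is to show that the nonmonotone window does not keep the reference value strictly above the optimum. The GLL reference value $f_k^{\mathrm{ref}}=\max_{0\le i\le\min(k,M)}f(\mathbf{x}_{k-i})$ is nonincreasing, hence converges to some $L\ge f(\bar{\mathbf{x}})$; exploiting the sufficient-decrease inequality that produced $\liminf_k\|\mathbf{d}^1(\mathbf{x}_k)\|_\infty=0$ (so that $f$-values in a bounded block of indices around $k_j$ cluster at $f(\bar{\mathbf{x}})$) one shows $L=f(\bar{\mathbf{x}})$, whence $f(\bar{\mathbf{x}})\le f(\mathbf{x}_k)\le f_k^{\mathrm{ref}}\to f(\bar{\mathbf{x}})$ and $f(\mathbf{x}_k)\to f(\bar{\mathbf{x}})$. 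Strong convexity with modulus $\mu>0$ gives $\tfrac{\mu}{2}\|\mathbf{x}_k-\bar{\mathbf{x}}\|^2\le f(\mathbf{x}_k)-f(\bar{\mathbf{x}})-\nabla_{\mathbf{x}} f(\bar{\mathbf{x}})^{T}(\mathbf{x}_k-\bar{\mathbf{x}})$, and the first-order optimality of $\bar{\mathbf{x}}$ over $\mathcal{D}$ makes the last term nonpositive, so $\|\mathbf{x}_k-\bar{\mathbf{x}}\|^2\le\tfrac{2}{\mu}\big(f(\mathbf{x}_k)-f(\bar{\mathbf{x}})\big)\to 0$. I expect this identification $L=f(\bar{\mathbf{x}})$ to be the only nonroutine step; it is dealt with exactly as in the convex-case analysis of \cite{hager2006new}, and the two-phase structure does not interfere because once the iterate is near $\bar{\mathbf{x}}$ the RCGD line search is monotone.
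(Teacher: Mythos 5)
Your proposal is correct and follows essentially the same route as the paper, whose entire proof is a one-line deferral to Corollary~4.2 of \cite{hager2006new}: reduce to the global convergence theorem (Theorem~\ref{th:khzasa_gc}), use strong convexity to identify the unique global minimizer with the stationary accumulation point, and upgrade to convergence of the whole sequence via the monotone GLL reference value and the strong-convexity growth inequality. The only difference is that you spell out the verification of G1--G2 and the $\liminf$-to-$\lim$ step that the paper leaves entirely to the citation, and you correctly flag the identification $L=f(\bar{\mathbf{x}})$ as the one nonroutine point.
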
%
\begin{proof}%
see corollary 4.2 of \cite{hager2006new}.%
\end{proof}%

The following theorem results the local superlinear convergence rate
of algorithm 4 to a nondegenerate stationary point.

\begin{theorem} (nondegenerate local convergence)  %
\label{th:khzasa_lc_nondegenerate}%
If $f$ is continuously differentiable and the iterates
$\mathbf{x}_k$ generated by algorithm 4 with $\epsilon=0$ converge
to a nondegenerate (with respect to both of the bound and linear
constraints) stationary point $\mathbf{x}^*$, then after a finite
number of iterations, algorithm 4
performs only RCGD iterations without restarts.%
\end{theorem}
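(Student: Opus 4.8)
The plan is to adapt the local-analysis argument of Hager and Zhang (Theorem~5.1 of \cite{hager2006new}) to the present setting, in which the reduced subproblem is the null-space problem of the active linear constraint rather than merely the free-variable subproblem. Three ingredients must be assembled: (i) the projected-gradient residual $\mathbf{d}^{1}(\mathbf{x}_k)$ converges to zero; (ii) the active set --- with respect to both the bound constraints and the linear constraint of \eqref{eq:P} --- is identified after finitely many iterations; and (iii) once identification has occurred the undecided set $\mathcal{U}(\mathbf{x}_k)$ is empty, so Algorithm~4 remains in the RCGD phase and never restarts.

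Step (i) is immediate: since $f\in C^{1}$ and $\mathcal{P}_{\mathcal{D}}$ is Lipschitz, $\mathbf{x}\mapsto\mathbf{d}^{1}(\mathbf{x})$ is continuous, and $\mathbf{d}^{1}(\mathbf{x}^{*})=0$ because $\mathbf{x}^{*}$ is a stationary point of \eqref{eq:P}; hence $\|\mathbf{d}^{1}(\mathbf{x}_k)\|\to 0$. For step (ii) I would invoke the nondegeneracy hypothesis in the style of the classical finite-identification results: for each bound active at $\mathbf{x}^{*}$, strict complementarity keeps the corresponding reduced-gradient component bounded away from zero near $\mathbf{x}^{*}$, so one spectral projected-gradient step drives that coordinate of $\mathbf{x}_k$ exactly onto its bound; for each index inactive at $\mathbf{x}^{*}$ the iterate stays strictly interior for $k$ large; and, when $\mathcal{D}=\mathcal{D}_{\mathcal{I}}$, nonvanishing of the multiplier of the linear constraint freezes its active/inactive status. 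Thus there is an index $K$ with $\mathcal{A}(\mathbf{x}_k)=\mathcal{A}(\mathbf{x}^{*})$ and a fixed linear-constraint status for all $k\ge K$; by item~(d) of Section~\ref{sec:exploiting_reduced_solver} together with the discussion around \eqref{eq:step_inactive_c_box}, once the working set is frozen the feasibility caps $\hat{\alpha}_k$ and $\bar{\alpha}_k$ are slack near $\mathbf{x}^{*}$, so RCGD then takes genuine conjugate-gradient steps.

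For step (iii) I would check $\mathcal{U}(\mathbf{x}_k)=\emptyset$ for $k$ large. If $i$ is active at $\mathbf{x}^{*}$, then for $k\ge K$ the coordinate $x_{k,i}$ is pinned to a bound, so $\min\{x_{k,i}-l_i,\ u_i-x_{k,i}\}=0<\|\mathbf{d}^{1}(\mathbf{x}_k)\|^{b}$ (the iterates do not reach $\mathbf{x}^{*}$ exactly) and the second defining inequality of $\mathcal{U}$ fails; if $i$ is inactive at $\mathbf{x}^{*}$, the relevant reduced-gradient component vanishes at $\mathbf{x}^{*}$ and is therefore $O(\|\mathbf{d}^{1}(\mathbf{x}_k)\|)$ near $\mathbf{x}^{*}$, whereas $\|\mathbf{d}^{1}(\mathbf{x}_k)\|^{a}$ tends to zero more slowly because $a\in(0,1)$, so the first defining inequality of $\mathcal{U}$ also fails eventually. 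Hence $\mathcal{U}(\mathbf{x}_k)=\emptyset$ for all $k\ge K'$ with some $K'\ge K$, which forces Algorithm~4 into the RCGD branch; since restarts of RCGD are triggered only by a change of the working set and the working set is now frozen, the algorithm performs only RCGD iterations without restarts from $K'$ onward.

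The step I expect to be the main obstacle is the identification argument (ii)--(iii) in the presence of the linear constraint. Because the reduced subproblem is the null-space problem, one must be explicit that the quantity governing $\mathcal{U}$ at the inactive indices is the reduced gradient $\mathbf{Z}^{T}\nabla_{\mathbf{x}}f$ and not the full gradient $\nabla_{\mathbf{x}}f$ --- only the former vanishes at free indices at $\mathbf{x}^{*}$, since $g_i(\mathbf{x}^{*})=\lambda^{*}a_i$ need not be zero there --- and then re-derive the coordinate-wise comparison $|g_i(\mathbf{x}_k)|=O(\|\mathbf{d}^{1}(\mathbf{x}_k)\|)$ that empties $\mathcal{U}$ in this reduced metric; one must also confirm that the two feasibility truncations $\hat{\alpha}_k$ (bounds) and $\bar{\alpha}_k$ (the $\mathcal{D}_{\mathcal{I}}$ inequality) are inactive near $\mathbf{x}^{*}$. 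Granting these, the remainder is a line-by-line transcription of the proof of Theorem~5.1 of \cite{hager2006new}.
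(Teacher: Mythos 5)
Your route is the same one the paper takes: the paper's entire proof of this theorem is the single line ``followed from theorem 5.1 of \cite{hager2006new}'', so your three-step decomposition (the residual $\mathbf{d}^{1}(\mathbf{x}_k)\to 0$; finite identification of the active set under nondegeneracy; emptiness of $\mathcal{U}(\mathbf{x}_k)$ forcing a permanent switch to RCGD without restarts) is precisely the adaptation of Hager--Zhang's argument that the author leaves implicit, not an alternative to it, and its skeleton is sound. The obstacle you flag at the end deserves emphasis because it is not just a difficulty in your write-up but an unaddressed point in the paper itself: the paper defines $\mathcal{U}(\mathbf{x})$ using $g_i(\mathbf{x})$, the $i$-th component of the \emph{full} gradient $\nabla_{\mathbf{x}}f$, and at an index that is free at a nondegenerate stationary point of \eqref{eq:P} this component converges to $\lambda^{*}a_i$, which is generically nonzero; since $\|\mathbf{d}^{1}(\mathbf{x}_k)\|^{a}\to 0$ while the distance of $x_{k,i}$ to its bounds stays bounded away from zero, such an index would belong to $\mathcal{U}(\mathbf{x}_k)$ for all large $k$, so $\mathcal{U}$ would never empty and the transcription of Theorem~5.1 would break down at exactly the step you isolate. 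Your proposed repair --- measure undecidedness with the reduced gradient $\mathbf{Z}^{T}\nabla_{\mathbf{x}}f$ (equivalently, with $\nabla_{\mathbf{x}}f-\lambda\mathbf{a}$ at a current multiplier estimate), which does vanish at the free indices of $\mathbf{x}^{*}$ --- is what makes the Hager--Zhang argument go through, and it properly belongs in the statement of the algorithm rather than in the proof. With that modification in place, the rest of your sketch is a correct, if condensed, version of the intended proof.
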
%
\begin{proof}%
followed from theorem  5.1 of \cite{hager2006new}.%
\end{proof}%

The following theorems results the local superlinear convergence
rate of algorithm 4 to a stationary point (even degenerate
stationary point) under strong second-order sufficient optimality
condition.

\begin{theorem} (degenerate local convergence)  %
\label{th:khzasa_lc_degenerate_e}%
Assume $\mathcal{D} \stackrel{def}{=} \mathcal{D}_{\mathcal{E}}$ in
problem \eqref{eq:P} and $f$ is is twice-continuously
differentiable. If the iterates $\mathbf{x}_k$ generated by
algorithm 4 with $\epsilon=0$ converge to a stationary point
$\mathbf{x}^*$ satisfying the strong second-order sufficient
optimality condition, then after a finite number of iterations,
algorithm 4
performs only RCGD iterations without restarts.%
\end{theorem}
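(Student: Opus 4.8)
The plan is to reduce the statement, as with the other local-convergence theorems of this section, to the degenerate local-convergence analysis of HZ-ASA in \cite{hager2006new} (the one carried out under the strong second-order sufficient optimality condition, without strict complementarity), the only new ingredient being the single linear equality constraint. The key observation is that when $\mathcal{D}\stackrel{def}{=}\mathcal{D}_{\mathcal{E}}$ this constraint is \emph{always} active, hence permanently in the working set, and the null-space matrix $\mathbf{Z}$ with which it is treated --- the Householder $QR$ factor of $\mathbf{a}$ from Section \ref{sec:qr_factor} --- is a \emph{fixed} orthogonal matrix ($\mathtt{cond}(\mathbf{Z})=1$). Thus algorithm 4 on $\mathcal{D}_{\mathcal{E}}$ is HZ-ASA equipped with two layers of working set: the ordinary bound-constraint active set $\mathcal{A}(\mathbf{x}_k)$, handled by fixing coordinates exactly as in \cite{hager2006new}, and the fixed linear layer, which merely confines the free-coordinate search of each RCGD step to the fixed subspace $\mathcal{N}(\mathbf{a})$. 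Since the linear layer never changes, it creates no active-set ambiguity, and the only degeneracy to be dealt with is the familiar one arising from weakly active bound constraints.

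First I would note that, because the iterates converge to the stationary point $\mathbf{x}^*$ and $\mathbf{d}^1(\cdot)$ is continuous on $\mathcal{D}$, $\mathbf{d}^1(\mathbf{x}_k)\to\mathbf{d}^1(\mathbf{x}^*)=0$ (in accordance with Theorem \ref{th:khzasa_gc}). Then I would show that $\mathcal{A}(\mathbf{x}_k)$ stabilizes after finitely many iterations and that the undecided set $\mathcal{U}(\mathbf{x}_k)$ eventually becomes empty --- the point beyond which algorithm 4 no longer branches to the SPG phase. Strongly active bounds are caught in finitely many iterations by the projected-gradient mechanism, exactly as in \cite{hager2006new}. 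For weakly active bounds one uses that the relevant gradient quantity (read relative to the linear constraint --- i.e.\ $\nabla_{\mathbf{x}}f$ corrected by the current equality-multiplier estimate, consistent with the projection formula $\mathcal{P}_{\mathcal{D}_{\mathcal{E}}}(\mathbf{y})=\mathtt{mid}(\mathbf{l},\,\mathbf{y}-\lambda^*\mathbf{a},\,\mathbf{u})$ of Theorem \ref{th:proj_e}) and the distances $\min\{x_i-l_i,\,u_i-x_i\}$ evaluated at $\mathbf{x}_k$ both tend to zero at $\mathbf{x}^*$, while the exponents $a\in(0,1)$, $b\in(1,2)$ are chosen so that $\|\mathbf{d}^1(\mathbf{x}_k)\|^a$ and $\|\mathbf{d}^1(\mathbf{x}_k)\|^b$ decay at the correct relative rates against these quantities. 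The argument of \cite{hager2006new} then carries over essentially verbatim, the requisite Lipschitz/identification estimates being available because $\mathcal{P}_{\mathcal{D}_{\mathcal{E}}}$ is globally Lipschitz (a projection onto a convex set) and piecewise linear (Theorem \ref{th:proj_e}).

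Once $\mathcal{U}(\mathbf{x}_k)=\emptyset$ and $\mathcal{A}(\mathbf{x}_k)$ has settled, only RCGD steps are performed; to upgrade this to ``without restarts'' I would invoke the strong second-order sufficient optimality condition directly. It states that $\mathbf{w}^T\nabla^2_{\mathbf{x}}f(\mathbf{x}^*)\,\mathbf{w}>0$ for every nonzero $\mathbf{w}\in\mathcal{N}(\mathbf{a})$ with $w_i=0$ on the strongly active bound set, i.e.\ the reduced Hessian $\mathbf{Z}^T\nabla^2_{\mathbf{x}}f(\mathbf{x}^*)\,\mathbf{Z}$ restricted to the stabilized free subspace is positive definite. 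Since $\mathbf{Z}$ is fixed and orthogonal and the reduced objective is twice continuously differentiable, near the reduced limit point RCGD is simply CGDESCENT applied to a function with locally positive-definite Hessian, so its restart test is not triggered and algorithm 4 does not revert to SPG; the no-restart conclusion then follows from the CGDESCENT theory of \cite{hager2005new} as used in \cite{hager2006new}. One also checks that the step-length cap $\hat\alpha_k$ of \eqref{eq:step_inactive_c_box} (with $\bar\alpha_k=\infty$ in the $\mathcal{D}_{\mathcal{E}}$ case) is eventually inactive, because once $\mathcal{A}(\mathbf{x}_k)$ is correct the free components of $\mathbf{x}_k$ stay bounded away from their bounds near $\mathbf{x}^*$ --- this is item (d) of Section \ref{sec:exploiting_reduced_solver}.

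The \textbf{hardest part} will be the second step: proving that $\mathcal{U}(\mathbf{x}_k)$ becomes and stays empty in the presence of weakly active bounds, since that is exactly where strict complementarity is dropped. It requires combining the identification estimates for the projection onto $\mathcal{D}_{\mathcal{E}}$ with the local error bound furnished by the strong SOSC on the reduced space (roughly $\|\mathbf{x}_k-\mathbf{x}^*\|=O(\|\mathbf{d}^1(\mathbf{x}_k)\|)$), and one must verify that the equality multiplier $\lambda^*$ possibly vanishing does not degrade any of these estimates --- it does not, since the reduced objective and its Hessian do not involve $\lambda^*$ at all. Everything else is a faithful transcription of the proof in \cite{hager2006new} through the fixed null-space change of variables.
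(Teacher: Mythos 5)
Your proposal is correct and follows essentially the same route as the paper: the paper's entire proof is the one-line citation ``followed from theorem 5.7 of \cite{hager2006new}'', i.e.\ a direct reduction to the degenerate local-convergence theorem of HZ-ASA, which is exactly the reduction you carry out (via the permanently active equality constraint and the fixed orthogonal null-space matrix $\mathbf{Z}$). Your write-up simply makes explicit the change of variables and the identification/no-restart details that the paper leaves implicit in that citation.
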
%
\begin{proof}%
followed from theorem 5.7 of \cite{hager2006new}.%
\end{proof}%

\begin{theorem} (degenerate local convergence)  %
\label{th:khzasa_lc_degenerate_i}%
Assume $\mathcal{D} \stackrel{def}{=} \mathcal{D}_{\mathcal{I}}$ in
problem \eqref{eq:P} and $f$ is is twice-continuously
differentiable. If the iterates $\mathbf{x}_k$ generated by
algorithm 4 with $\epsilon=0$ converge to a stationary point
$\mathbf{x}^*$ satisfying the strong second-order sufficient
optimality condition and  $\mathbf{x}^*$ is not degenerate with
respect to the linear inequality constraint, i.e., the optimal value
of lagrange multiplier corresponding to the active linear constraint
is nonzero, then after a finite number of
iterations, algorithm 4 performs only RCGD iterations without restarts.%
\end{theorem}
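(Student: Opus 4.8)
The plan is to reduce this statement to the equality case already established in Theorem~\ref{th:khzasa_lc_degenerate_e}, by showing that the face of the bilateral constraint $b_l\leqslant\mathbf{a}^T\mathbf{x}\leqslant b_u$ active at $\mathbf{x}^*$ is correctly identified, and thereafter retained, after finitely many iterations of algorithm~4. Two cases arise: either $b_l<\mathbf{a}^T\mathbf{x}^*<b_u$, so the linear inequality is inactive at the limit, or one of $\mathbf{a}^T\mathbf{x}^*=b_l$, $\mathbf{a}^T\mathbf{x}^*=b_u$ holds.

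In the first case I would argue as follows. By Theorem~\ref{th:khzasa_gc} together with continuity, $\|\mathbf{d}^1(\mathbf{x}_k)\|\to 0$, so the SPG and RCGD steps tend to zero; since $\mathbf{x}_k\to\mathbf{x}^*$ lies strictly inside $\{\mathbf{x}:b_l\leqslant\mathbf{a}^T\mathbf{x}\leqslant b_u\}$, the feasibility cap $\bar{\alpha}_k$ in algorithm~4 is eventually never active and $\mathbf{Z}=\mathbf{I}$ throughout. Hence for $k$ large algorithm~4 coincides with the bound-constrained HZ-ASA of \cite{hager2006new} applied to $\min f$ over $\mathcal{B}$; moreover the critical cone at $\mathbf{x}^*$ is unchanged when the inactive linear constraint is dropped, so $\mathbf{x}^*$ satisfies the strong second-order sufficient optimality condition for the box-constrained problem, and the degenerate bound-constrained convergence result of \cite{hager2006new} (theorem~5.7 there) yields the conclusion.

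In the second case, assume $\mathbf{a}^T\mathbf{x}^*=b_u$ (the case $b_l$ is symmetric). The hypothesis that the multiplier of the active linear constraint is nonzero is, together with the KKT sign condition, exactly strict complementarity for the single inequality $\mathbf{a}^T\mathbf{x}\leqslant b_u$. Combining this with $\mathbf{x}_k\to\mathbf{x}^*$ and $\|\mathbf{d}^1(\mathbf{x}_k)\|\to 0$, I would invoke the gradient-projection identification property to show that for all sufficiently large $k$ the SPG projection $\mathcal{P}_{\mathcal{D}_\mathcal{I}}(\mathbf{x}_k-\alpha\nabla_\mathbf{x} f(\mathbf{x}_k)^T)$ lands on the face $\mathbf{a}^T\mathbf{x}=b_u$ for the relevant stepsizes, and that once an RCGD cycle begins with this face active the cap $\bar{\alpha}_k$ never forces a move off it, since such a move would contradict the sign of the multiplier; this is also where the persistence requirement ``new active set $\subseteq$ previous active set'' of item~(d) of Section~\ref{sec:exploiting_reduced_solver} enters. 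From that point on, by Theorem~\ref{th:proj_ie} the projection onto $\mathcal{D}_\mathcal{I}$ near $\mathbf{x}^*$ agrees with the projection onto $\mathcal{D}_\mathcal{E}$ with $b$ replaced by $b_u$, and the null-space reduction is performed on $\mathcal{N}(\mathbf{a})$; consequently algorithm~4 becomes precisely algorithm~4 applied to the equality-constrained knapsack problem $\min f(\mathbf{x})$ subject to $\mathbf{a}^T\mathbf{x}=b_u$, $\mathbf{l}\leqslant\mathbf{x}\leqslant\mathbf{u}$. Since the active manifold, and hence the strong second-order sufficient optimality condition at $\mathbf{x}^*$, is unchanged by this replacement, Theorem~\ref{th:khzasa_lc_degenerate_e} applies and gives that after finitely many further iterations only RCGD iterations without restarts are performed.

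The main obstacle is the finite identification and the persistence of the active linear constraint in the second case; once these are in hand, the remainder is routine bookkeeping that matches the reduced iteration with the equality-constrained algorithm. Note that degeneracy with respect to the bound constraints is still permitted throughout: Theorem~\ref{th:khzasa_lc_degenerate_e} (and the result of \cite{hager2006new} behind it) rest on the strong second-order sufficient optimality condition rather than on strict complementarity at the bounds, so only nondegeneracy with respect to the single linear inequality is needed here.
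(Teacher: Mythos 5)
Your proposal is correct in outline, and it actually supplies more of an argument than the paper does: the paper's entire proof of Theorem~\ref{th:khzasa_lc_degenerate_i} is the single line ``followed from theorem 5.7 of \cite{hager2006new}'', i.e.\ a bare citation to the Hager--Zhang result for \emph{box-constrained} problems. Your two-case reduction is exactly the bridge that citation tacitly presupposes: when the bilateral constraint is inactive at $\mathbf{x}^*$ the algorithm eventually coincides with the box-constrained HZ-ASA and their Theorem~5.7 applies verbatim; when one face is active, the nonzero-multiplier hypothesis (which is precisely why the theorem statement carries that extra assumption, absent from Theorem~\ref{th:khzasa_lc_degenerate_e}) gives finite identification of that face, after which the null-space reduction turns the iteration into the equality-constrained case already covered. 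So the two approaches are not in conflict --- yours is an expansion of the paper's citation rather than a different route --- but yours makes visible the one genuinely new ingredient relative to \cite{hager2006new}. The only soft spots are the ones you flag yourself: the identification step is invoked rather than proved (one would need to verify the gradient-projection identification property for the specific projection $\mathcal{P}_{\mathcal{D}_\mathcal{I}}$ of Theorem~\ref{th:proj_ie}, and rule out the cap $\bar{\alpha}_k$ truncating steps infinitely often), and the transfer of the strong second-order sufficient optimality condition to the reduced problem on $\mathcal{N}(\mathbf{a})$ deserves a sentence. Since the paper glosses over exactly these points, your sketch is, if anything, the more honest account of what remains to be checked.
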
%
\begin{proof}%
followed from theorem 5.7 of \cite{hager2006new}.%
\end{proof}%

\begin{remark}
There is a simple strategy to virtually benefit from the local
superlinear convergence rate of algorithm 4 in the case of
degeneracy of the minimizer with respect to linear inequality
constraints. Since the different states of the active set of
\eqref{eq:P} with respect to linear constraints includes only three
situtions, it is possible to  solve \eqref{eq:P} sequentially three
times with algorithm 4 using its strong local convergence. Assume,
$\mathbf{x}_m^*$, $\mathbf{x}_l^*$ and $\mathbf{x}_u^*$ are
solutions of \eqref{eq:P} by algorithm 4 without considering the
linear constraint in RCGD, considering the linear equality
constraint with $b=b_l$ in RCGD and considering the linear equality
constraint with $b=b_u$ in RCGD respectively. Note that we can first
solve for $\mathbf{x}_m^*$ to possibly avoid further computation. If
$\mathbf{x}_m^*$ satisfies the bilateral inequality constraint, it
is equal to a local solution of problem and the computation will be
discarded (of course in this case there is no degeneracy with
respect to linear constraints). Otherwise, the local solution of
problem is one of $\mathbf{x}_l^*$ or $\mathbf{x}_u^*$ which
correspond to the smaller objective function value. Therefore, this
strategy finds a local solution of problem by solving three
sequential superlinearly convergent optimizations. However, as the
cost of computation is tripled by this strategy, the overall rate of
convergence can be sub-linear in the worst conditions.
\end{remark}


\section{Birgin-Martinez active set algorithm for knapsack problems}%
\label{sec:BMASA}%

In \cite{birgin2002large} an active set  algorithm is introduced to
solve large-scale bound constrained optimization problems. The
general outline of this algorithm is very similar to that of Hager
and Zhang; and differences are mainly related to some technical
issues. It's worth mentioning that this algorithm does not posses a
superlinear local rate of convergence like Hager-Zhang algorithm,
and in the best conditions the local rate of convergence will not be
better than linear. However, our numerical experiences showed that
the performance of this algorithm is very promising solving
large-scale real world problems (e.g. when using accurate functional
value and its gradient is not economic or possible).

Like Hager-Zhang active set  algorithm, Birgin-Martinez active set
algorithm is a two-phase algorithm: a gradient projection step to
explore the working set and an unconstrained optimization solver on
the space spanned by free variables. In \cite{andretta2010partial}
this algorithm was extended to solve linearly constrained
optimization problems. Due to its expensive projection and
null-space steps, this algorithm is not feasible for large-scale
problems. Therefore, our algorithm is somehow an adaptation of
\cite{andretta2010partial} algorithm to solve knapsack problems. To
do this job, one case follows the same procedure as described in the
previous section. More clearly, this algorithm includes a general
projection onto constraint space which should be replaced by our
problem-specific projection step. Moreover, this algorithm uses a
general null-space manipulation of active constraints which again
should be replaced by our efficient problem-specific null-space
method.

Our adapted algorithm here will be almost identical to Algorithm 3.1
of \cite{andretta2010partial} with this difference that the partial
projection step in \cite{andretta2010partial} should be replaced by
our exact projection step (the parameter $\delta$ in this algorithm
should be replaced by $\infty$). The convergence theories of our
algorithm will be in fact the simplified version of that of
\cite{andretta2010partial}. For more details, interested readers are
referred to \cite{birgin2002large,andretta2010partial} and also our
Fortran implementation of method which is freely available from web.


\section{Application to topology optimization problems}%
\label{sec:topopt}%

In this section we apply the introduced method to solve topology
optimization problems, cf. \cite{bendsoe2003topology}. As a model
problem we will solve the problem already considered in
\cite{donoso2006numerical}. The description of this problem is as
follows: considering two isotropic conducting materials, with
thermal conductivities $k_\alpha$ and $k_\beta$, $0 < k_\alpha <
k_\beta$, in a simply connected design domain $\Omega \subset
\mathbb{R}^d (d = 2, 3)$; the goal is mixing these materials with a
fixed ratio in $\Omega$, such that the total temperature gradient in
$\Omega$ is minimized under the thermal load $f \in L^2(\Omega)$, more precisely:%

\begin{equation}%
  \nonumber
    (P) \quad %
    \left\{%
\begin{array}{rll}
      \arg \min_{w}  J(w) =  &\frac 12 \int_{\Omega} \nabla \theta \cdot \nabla \theta \ d\textbf{x},
      &\\\\
      \texttt{subject to:} \\\\
      - \nabla \cdot (k(w) \nabla \theta) = & f(\mathbf{x})   &
    \texttt{in} \quad \Omega \\
    \theta(\mathbf{x})  = & \theta_0(\mathbf{x})   &
    \texttt{on} \quad \partial\Omega  \\ %
    k(w)    = & w k_\beta  + (1-w) k_\alpha             &
    \texttt{in} \quad \Omega  \\ %
    \int_{\Omega} w \ d \textbf{x}   = & R |\Omega|, \quad 0<R<1, & 0  \leqslant w \leqslant
    1
\end{array}%
\right.%
\end{equation}%
\\
where $\theta \in H^1(\Omega)$ is the state variable and $w \in
L^2(\Omega)$ is the control parameter (topology indicator field). By
straightforward derivation, the first order optimality condition for
problem P can be written in the following form:

\begin{equation}%
  \nonumber
    (OC) \quad %
    \left\{%
\begin{array}{rll}
      - \nabla \cdot (k(w) \nabla \theta) = & f(\mathbf{x})   &
    \texttt{in} \quad \Omega \\
    \theta(\mathbf{x})  = & \theta_0(\mathbf{x})   &
    \texttt{on} \quad \partial\Omega  \\ %
    - \nabla \cdot (k(w) \nabla \eta) = & -\nabla\cdot(\nabla \theta)  &
    \texttt{in} \quad \Omega \\
   \eta(\mathbf{x})  = & 0   &
  \texttt{on} \quad \partial\Omega \\  %
    k(w)    = & w k_\beta  + (1-w) k_\alpha             &
    \texttt{in} \quad \Omega \\  %
    \mathcal{P}_{\mathcal{D}_\mathcal{I}}\big(G(\mathbf{x}) \big)    = & 0 &
    \texttt{in} \quad \Omega  \\%
    G(\mathbf{x})    = & -(k_\beta - k_\alpha) \nabla \theta \cdot \nabla \eta &
    \texttt{in} \quad \Omega  \\%
\end{array}%
\right.%
\end{equation}%
\\
where $\eta \in H^1_0(\Omega)$ is the adjoint state, $G$ is the
$L^2$ gradient of the objective functional with respect to $w$ and
$\mathcal{P}_{\mathcal{D}_\mathcal{I}}(u)$ denotes the $L^2$
projection of function $u$ onto the admissible set
$\mathcal{D}_\mathcal{I}$,
\[
\mathcal{D}_\mathcal{I} = \{ w \in L^2(\Omega) \ | \  \int_\Omega
w(\mathbf{x}) d \mathbf{x}  = R |\Omega|, \quad 0<R<1, \quad 0
\leqslant w \leqslant 1 \}
\]

Consider the finite-dimensional counterpart of problem (P) which is
resulted by discretization of $\Omega$ into an $n$ control volumes.
Also assume that the state variables and design parameter are
defined at the center of each control volume. Under these
assumptions, the admissible design domain, $\mathcal{D}_\mathcal{I}$
forms a simplex in $\mathbb{R}^n$ which is identical to the knapsack
constraint. If we solve the state PDE at every stage of optimization
(for a specific value of $w$), then the above optimization problem
will be equivalent to problem \eqref{eq:P}. Note that vector
$\mathbf{a}$ in \eqref{eq:deq} is equivalent to the volume of each
control in this model problem.

Since it is not easy to construct the above topology optimization
problem such that it ensures pre-requirement of extended Hager-Zhang
active set algorithm, the extended Birgin-Martinez active set
algorithm will be used here. Moreover, it is worth mentioning that
in all of our experience with these algorithm (solving similar
optimization problems), the extended Birgin-Martinez performs
superior. We think that main reasons for this observation is the
inaccuracy of objective function and its gradient in our experience
besides to our failure to adjust the algorithm's control
appropriately.

To do numerical experiment we solve problem (P) in two and three
dimensions for $\Omega = [0,1]^2$ and  $\Omega = [0,1]^3$
respectively. The spatial domain is divided into $127^3$ and $31^3$
control volumes in two and three dimensions respectively. In all of
experiments here $k_\alpha = 1$, $k_\beta = 2$, $f(\mathbf{x}) = 1$
and $R = 0.4$. The governing PDE are solved by cell centered finite
volume method using central difference scheme and related system of
linear equations are solved by preconditioned conjugate gradient
method with relative convergence threshold $1.e-20$. As an stopping
criteria, the optimization is discarded when the relative variation
of topology becomes bellow $1.e-3$. Notice that using finite volume
method we do not observe any topological instability problem
(checkerboard pattern) which is common in topology optimization
problems using finite element method.

Results of our numerical experiments includes the variation of the
objective functional during optimization cycles and final resulted
topology ($w$-field), are shown in figures \ref{fig:obj_hist} and
\ref{fig:final_top}. Plots obviously show the success of the
presented algorithm to solve topology optimization problems. Note
that in our procedure the PDE constraint is satisfied upto the
accuracy of method used to solve PDE and knapsack constraint is
satisfied upto the machine precision. It is worth mentioning that we
already applied the presented method successfully to vast families
of topology optimization problems which are not included here due to
the space constraint.

\begin{figure}[ht]%
\begin{center}%
\includegraphics[width=13.cm]{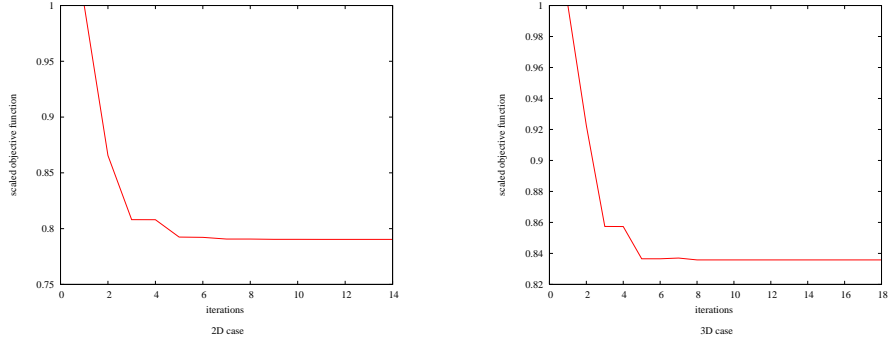}%
\caption{Variation of scaled objective function during optimization
cycles for 2D (left) and 3D (right) examples.
}%
\label{fig:obj_hist}%
\end{center}%
\end{figure}%
\begin{figure}[ht]%
\begin{center}%
\includegraphics[width=14.cm]{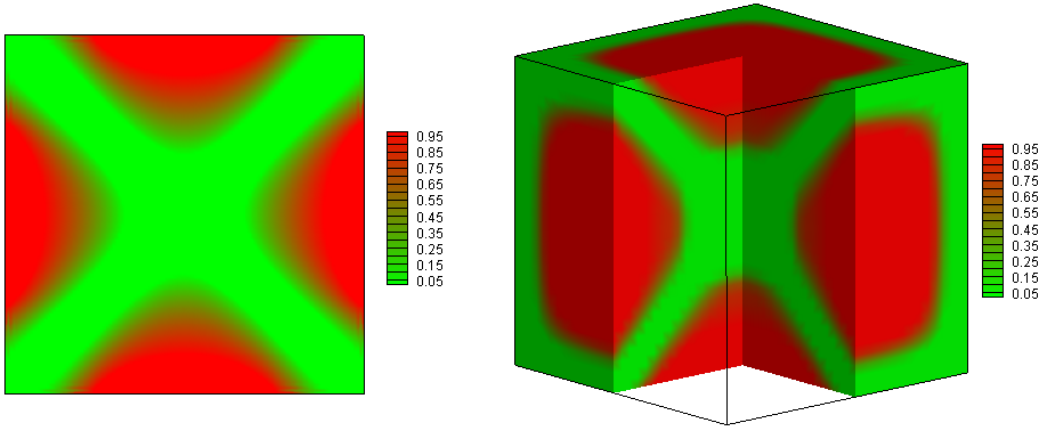}%
\caption{Final distribution of material field, $w$, for 2D (left)
and 3D (right) examples.
}%
\label{fig:final_top}%
\end{center}%
\end{figure}%
%


\section{Summary}%
\label{sec:summary}%

In the present study, a general framework is suggested to adapt
bound-constrained optimization solvers to solve optimization
problems with bounds and an additional linear constraint (in the
form of either equality or bilateral inequality). This framework
includes two main ingredients which are the projection of an
arbitrary point onto the feasible set and null-space manipulation of
the related linear constraint. The implementation of the method with
specific focus on the Hager-Zhang active set algorithm
\cite{hager2006new} is discussed in details. Moreover, we briefly
discuss the adaptation of Birgin-Martinez active set algorithm
\cite{birgin2002large} to there problems which is structurally very
similar to Hager-Zhang active set algorithm.

It seems that, following the presented approach, it should be
possible to perform such an extension for alternative
box-constrained optimization methods. Notice that this extension can
be much simpler and straightforward for some box-constrained
solvers. For instance, using just the projection operator introduced
in this study, it is natural to extend Newton method of
\cite{lin1999newton} or affine-scaling interior-point Newton method
of \cite{heinkenschloss1999superlinear} for this purpose without
further difficulties.


\section*{Acknowledgements}
The author would like to thanks Hongchao Zhang, Marina Andretta and
Ernesto Birgin for constructing comments.


\bibliographystyle{plain} 
\bibliography{biblio}%


\end{document}